\theoremstyle{plain}
\newtheorem{thm}{Theorem}
\newtheorem{cor}[thm]{Corollary}
\newtheorem{prop}[thm]{Proposition}
\newtheorem{nota}[thm]{Notation}
\newtheorem{rem}[thm]{Remark}
\newtheorem{defin}[thm]{Definition}
\newcommand{\R}{\mathbb{R}}
\newcommand{\N}{\mathbb{N}}
\newcommand{\C}{\mathbb{C}}
\def\multiset#1#2{\ensuremath{\left(\kern-.2em\left(\genfrac{}{}{0pt}{}{#1}{#2}\right)\kern-.2em\right)}}
\begin{document}

\title{Families of multiweights and pseudostars}
\author{Agnese Baldisserri \  \ \  \ \ Elena Rubei}
\date{}
\maketitle

{\bf Address of both authors:}
Dipartimento di Matematica e Informatica ``U. Dini'',

viale Morgagni 67/A,
50134  Firenze, Italia

{\bf
E-mail addresses:}
baldisser@math.unifi.it, rubei@math.unifi.it

{\bf Corresponding author:} Elena Rubei

\def\thefootnote{}
\footnotetext{ \hspace*{-0.36cm}
{\bf 2010 Mathematical Subject Classification: 05C05, 05C12, 05C22} 

{\bf Key words: weighted trees, dissimilarity families} }

\begin{abstract}
Let  ${\cal T}=(T,w)$ be a weighted  finite tree with leaves $1,..., n$.
For any  $I :=\{i_1,..., i_k \} \subset \{1,...,n\}$,
let $D_I ({\cal T})$ be
the weight of the minimal subtree of $T$ connecting $i_1,..., i_k$;
the  $D_{I} ({\cal T})$ are called 
$k$-weights of  ${\cal T}$. Given  a family 
of  real numbers parametrized by the $k$-subsets of $
\{1,..., n\}$, $\{D_I\}_{I  \in {\{1,...,n\} \choose k}}$,
  we say that a weighted tree ${\cal T}=(T,w)$ with leaves $1,..., n$ realizes the family 
if $D_I({\cal T})=D_I$ for any $ I  $.

In \cite{P-S}  Pachter and Speyer proved that, if $3 \leq  k \leq (n+1)/2$ and
 $\{D_I\}_{I  \in {\{1,...,n\} \choose k}}$ is  a family 
of  positive real numbers, 
  then there exists at most one  positive-weighted essential tree
 ${\cal T}$ with leaves $1,...,n$ that realizes the family (where ``essential'' means that  
there are no vertices of degree $2$). 
We say that a tree $P$  is a pseudostar of kind $(n,k)$ if  the cardinality of the leaf set is $n$ and any edge of $P$ divides the leaf set  
 into two sets such that  at least one of them has cardinality   $ \geq k$.
Here we show that, if $3 \leq  k \leq n-1$ and
 $\{D_I\}_{I  \in {\{1,...,n\} \choose k}}$ is  a family 
of  real numbers realized by some weighted tree, then  there is  exactly one  weighted essential pseudostar  ${\cal P}=(P,w)$ of kind $(n,k)$ 
 with leaves $1,...,n$ and  without internal edges of   weight $0$, that realizes the family; moreover we describe
how any other weighted tree realizing the family can be obtained from ${\cal P}$. 
Finally we examine the range of the total weight of the weighted trees realizing a fixed family.
\end{abstract}

\section{Introduction}

For any graph $G$, let $E(G)$, $V(G)$ and $L(G)$ 
 be respectively the set of the edges,   
the set of the vertices and  the set of the leaves of $G$.
A {\bf weighted graph} ${\cal G}=(G,w)$ is a graph $G$ 
endowed with a function $w: E(G) \rightarrow \R$. 
For any edge $e$, the real number $w(e)$ is called the weight of the edge. If all the weights are nonnegative (respectively positive), 
we say that the graph is {\bf nonnegative-weighted} (respectively {\bf positive-weighted});
if  the weights  of 
the internal edges are nonzero,
 we say that the graph is {\bf internal-nonzero-weighted} and,
if all the weights are nonnegative and the ones of 
the internal edges are positive,
 we say that the graph is {\bf internal-positive-weighted}.
For any finite subgraph $G'$ of  $G$, we define  $w(G')$ to be the sum of the weights of the edges of $G'$. 
In this paper we will deal only with weighted finite trees.

\begin{defin}
Let ${\cal T}=(T,w) $ be a weighted tree. 
For any distinct $ i_1, .....,i_k \in V(T)$,
 we define $ D_{\{i_1,...., i_k\}}({\cal T}) $ to be the weight of the minimal 
subtree containing $i_1,....,i_k$. We call this subtree ``the subtree realizing  $ D_{\{i_1,...., i_k\}}({\cal T}) $''.
 More simply, we denote 
$D_{\{i_1,...., i_k\}}({\cal T})$ by
$D_{i_1,...., i_k}({\cal T})$ for any order of $i_1,..., i_k$.  
We call  the  $ D_{i_1,...., i_k}({\cal T})$ 
the {\bf $k$-weights} of ${\cal T}$ and
 we call  a $k$-weight of ${\cal T}$ for some $k$  a {\bf multiweight}
 of ${\cal T}$.
\end{defin}


 If $S $ is a subset of $V(T)$,
 the $k$-weights give
a vector in $\mathbb{R}^{  S \choose k}$. This vector is called 
$k${\bf -dissimilarity vector} of $({\cal T}, S)$.
Equivalently,  we can speak 
of the {\bf family of the $k$-weights}  of $({\cal T}, S)$.   

If $S$ is a finite set, $k \in \N$ and $k < \# S$,
we say that a family of  real numbers
 $\{D_{I}\}_{I \in {S \choose k}}$  is {\bf treelike} (respectively p-treelike, nn-treelike, 
inz-treelike, ip-treelike) if there exist a weighted  (respectively 
positive-weighted,  nonnegative-weighted, internal-nonzero-weighted, 
internal-positive-weighted)  tree
 ${\cal T}=(T,w)$ and a subset $S$ 
of the set of its vertices such that $ D_{I}({\cal T}) = D_{I}$  for any 
 $k$-subset $I$ of $ S$.
If in addition 
$S \subset L(T)$,
we say that the family
 is  {\bf l-treelike} (respectively p-l-treelike, nn-l-treelike, inz-l-treelike,
  ip-l-treelike).

A criterion for a metric on a finite set to be nn-l-treelike
was established in  \cite{B}, \cite{SimP}, \cite{Za}:

\begin{thm} \label{Bune} 
Let $\{D_{I}\}_{I \in {\{1,...,n\} \choose 2}}$ be a set of positive real numbers 
satisfying the triangle inequalities.
It is p-treelike (or nn-l-treelike) 
 if and only if, for all distinct $a,b,c,d  \in \{1,...,n\}$,
the maximum of $$\{D_{a,b} + D_{c,d},D_{a,c} + D_{b,d},D_{a,d} + D_{b,c}
 \}$$ is attained at least twice. 
\end{thm}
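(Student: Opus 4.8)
The plan is to prove the two implications separately. \textbf{Necessity} is a short case analysis on four-leaf trees: suppose $\{D_{I}\}$ is p-treelike, say $D_{i,j}=D_{i,j}(\mathcal T)$ for a positive-weighted tree $\mathcal T=(T,w)$ with $\{1,\dots,n\}\subseteq V(T)$ (the nn-l-treelike case is identical). Fix distinct $a,b,c,d$ and let $T'$ be the minimal subtree of $T$ joining them; after suppressing its degree-$2$ vertices, $T'$ has exactly four leaves, hence is either a star with one internal vertex, or an ``H'': two internal vertices joined by an edge of some weight $m\ge 0$, with one pair of $\{a,b,c,d\}$ attached to each internal vertex. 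In the star case $D_{a,b}+D_{c,d}=D_{a,c}+D_{b,d}=D_{a,d}+D_{b,c}=w(T')$. In the H case, if the induced partition is $\{a,b\}\,|\,\{c,d\}$, a one-line computation with the pendant weights gives $D_{a,c}+D_{b,d}=D_{a,d}+D_{b,c}=(D_{a,b}+D_{c,d})+2m$; so in both cases the maximum of the three sums is attained at least twice. The same reasoning also yields the triangle inequalities, so nothing is lost by not assuming them in the converse.

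\textbf{Sufficiency.} By induction on $n$ I will show that a family of positive reals satisfying the triangle inequalities and the four-point condition is realized by a \emph{nonnegative}-weighted tree whose leaf set is $\{1,\dots,n\}$; the passage to p-treelike is then routine (contract all internal edges of weight $0$, and if a pendant edge has weight $0$ replace that leaf by the vertex it hangs from, which is legitimate because every $D_{i,j}>0$, so no two of the distinguished points can coincide). For $n\le 3$ the star with center $o$ and weights $x_i=\tfrac12(D_{i,j}+D_{i,k}-D_{j,k})\ge 0$ works. Assume the statement for $n-1$ and let $\mathcal T'$ realize the restriction of the family to $\{1,\dots,n-1\}$. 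For $i,j\le n-1$ put $f(i,j)=\tfrac12(D_{i,n}+D_{j,n}-D_{i,j})\ge 0$, choose $(a,b)$ minimizing $f$, set $t=f(a,b)$, and let $p$ be the point of the path between $a$ and $b$ in $\mathcal T'$ at path-distance $D_{a,n}-t$ from $a$; the triangle inequalities give $0\le D_{a,n}-t\le D_{a,b}$, and $(D_{a,n}-t)+(D_{b,n}-t)=D_{a,b}$ shows the choice is coherent. Obtain $\mathcal T$ by attaching a new pendant edge of weight $t$ at $p$ (subdividing an edge of $\mathcal T'$ if $p$ is interior to one). All distances inside $\mathcal T'$ are preserved and, by construction, $D_{a,n}(\mathcal T)=D_{a,n}$ and $D_{b,n}(\mathcal T)=D_{b,n}$.

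It remains to check $D_{c,n}(\mathcal T)=D_{c,n}$ for each remaining leaf $c\le n-1$; since the path from $c$ to $n$ in $\mathcal T$ runs through $p$, this amounts to $L(c,p)+t=D_{c,n}$, where $L(c,p)$ denotes the path-length from $c$ to $p$ inside $\mathcal T'$. Minimality of $t$ gives $f(a,b)\le f(a,c)$ and $f(a,b)\le f(b,c)$, which rearrange to $D_{a,c}+D_{b,n}\le D_{a,b}+D_{c,n}$ and $D_{b,c}+D_{a,n}\le D_{a,b}+D_{c,n}$; hence $D_{a,b}+D_{c,n}$ attains the maximum of the three sums associated with the quadruple $\{a,b,c,n\}$, and the four-point condition forces equality in one of these two inequalities. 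If $D_{a,c}+D_{b,n}=D_{a,b}+D_{c,n}$, then $f(a,c)=t$, and using that $\mathcal T'$ realizes $D$ on $\{a,b,c\}$ one checks that $p$ lies on the sub-arc of the path between $a$ and $b$ running from $a$ to the median $m$ of $a,b,c$; writing $L(c,p)=L(c,m)+L(m,p)$ and substituting $L(c,m)=\tfrac12(D_{a,c}+D_{b,c}-D_{a,b})$ and $L(a,m)=\tfrac12(D_{a,b}+D_{a,c}-D_{b,c})$ gives exactly $D_{c,n}-t$. The subcase $D_{b,c}+D_{a,n}=D_{a,b}+D_{c,n}$ is the mirror image under $a\leftrightarrow b$, and this closes the induction. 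I expect this last verification to be the real obstacle: it is the only point where the four-point condition is used rather than merely the triangle inequalities, and it needs the bookkeeping with the median $m$ — in particular the claim that the grafting point $p$ lands on the arc from $a$ to $m$ — to be set up with care; everything else (the two four-leaf topologies, the base case, and the translation between nonnegative- and positive-weighted realizations) is routine.
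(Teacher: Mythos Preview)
The paper does not give its own proof of this theorem: it is quoted as a classical result, with references to Buneman, Sim\~oes Pereira, and Zaretskii, and no argument is supplied. So there is nothing in the paper to compare your proposal against.

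That said, your proof is essentially correct and follows one of the standard routes. A few comments. For necessity, your reduction to the two four-leaf topologies is fine, but since in the p-treelike case the points $a,b,c,d$ need not be leaves of $T$, you should remark that the induced metric on any four points of a tree is still realized by a (possibly degenerate) H-tree with $m\ge 0$; your computation then covers all cases. For sufficiency, the inductive attachment via the minimizing Gromov product $f(a,b)$ is the classical ``add a leaf'' construction; the only delicate step is the one you flagged, namely that the grafting point $p$ lies on the sub-arc from $a$ to the median $m$ of $a,b,c$. This follows because in the subcase $f(a,c)=t$ you also have $f(a,b)\le f(b,c)$, which rearranges to $D_{a,n}+D_{b,c}\le D_{b,n}+D_{a,c}$, and that is exactly the inequality $D_{a,n}-t\le L(a,m)$ after substitution; you might make this explicit. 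The final passage from nn-l-treelike to p-treelike by contracting weight-$0$ edges is correct, and your observation that no two labelled points can collapse (since all $D_{i,j}>0$) is the right justification.
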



Also the study of general weighted trees can be interesting and   
in \cite{B-S}, Bandelt and Steel proved a result, analogous to
Theorem \ref{Bune}, for general weighted trees:

\begin{thm} {\bf (Bandelt-Steel)}
 For any set of real numbers $\{D_{I}\}_{I \in {\{1,...,n\} \choose 2}}$,
 there exists a weighted tree ${\cal T}$ with leaves $1,...,n$
such that $ D_{I} ({\cal T})= D_{I}$  for any $I \in {\{1,...,n\} \choose 2}$  if and only  if, for any $a,b,c,d \in  \{1,...,n\}$, at least two among 
 $ D_{a,b} + D_{c,d},\;\;D_{a,c} + D_{b,d},\;\; D_{a,d} + D_{b,c}$
are equal.
\end{thm}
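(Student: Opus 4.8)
The plan is to prove the Bandelt--Steel theorem by the standard ``four-point condition'' technique, adapted to signed weights.

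\medskip

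First I would dispose of the easy direction. Suppose a weighted tree ${\cal T}=(T,w)$ with leaves $1,\dots,n$ realizes the family. Fix $a,b,c,d$ and consider the minimal subtree $T'$ of $T$ spanning these four leaves. Since $T'$ is a tree with four marked leaves, it is combinatorially either a ``star'' with one internal vertex, or one of three possible ``H-shaped'' topologies determined by which pair of leaves is separated from the other pair by the central edge. In all cases a direct computation of the three quantities $D_{a,b}({\cal T})+D_{c,d}({\cal T})$, $D_{a,c}({\cal T})+D_{b,d}({\cal T})$, $D_{a,d}({\cal T})+D_{b,c}({\cal T})$ shows that each of them equals $w(T')$ plus possibly $2w(e_0)$, where $e_0$ is the central edge, and that exactly two of the three are equal to $w(T')$ (in the star case all three coincide). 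Hence at least two of the three sums are equal; this uses nothing about the sign of the weights, only additivity of $w$ along subtrees.

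\medskip

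For the converse I would argue by induction on $n$, the number of leaves. The base case $n\le 3$ is immediate: any three real numbers $D_{1,2},D_{1,3},D_{2,3}$ are realized by a star with three edges, solving the linear system $w(e_i)+w(e_j)=D_{i,j}$ (which has a unique solution since the $3\times 3$ matrix is invertible). For the inductive step, assume the four-point condition holds for $\{D_I\}$ on $\{1,\dots,n\}$ with $n\ge 4$. I would first construct a tree ${\cal T}'$ realizing the restricted family on $\{1,\dots,n-1\}$, then graft leaf $n$ on. To locate where $n$ attaches, I would use the four-point data involving $n$: for each pair $i,j\in\{1,\dots,n-1\}$, the quantity $\tfrac12(D_{i,n}+D_{j,n}-D_{i,j})$ is the ``Gromov-product-like'' distance from $n$ to the path between $i$ and $j$ in the tree to be built; the four-point condition guarantees these numbers are consistent, i.e.\ they determine a well-defined point $v$ on ${\cal T}'$ (possibly subdividing an edge, possibly coinciding with an existing vertex) and a well-defined pendant weight $w$ of the new edge $\{v,n\}$ — with the crucial point that $v$ and the pendant weight may now be negative or zero, which the signed setting permits but which must be handled so that the construction still makes combinatorial sense (subdividing an edge and assigning the two pieces weights that sum to the old weight, one of which can be negative).

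\medskip

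The main obstacle is exactly this grafting step: showing that the ``attachment point'' of leaf $n$ is well-defined and that the resulting tree realizes \emph{all} $k$-weights involving $n$, not merely the ones used to define the attachment. Concretely, after fixing $i_0,j_0$ realizing a suitable extremal choice and defining $v$ as the point at distance $\tfrac12(D_{i_0,n}+D_{j_0,n}-D_{i_0,j_0})$ from $i_0$ along the $i_0$--$j_0$ path in ${\cal T}'$, I must verify $D_{i,n}({\cal T})=D_{i,n}$ for \emph{every} $i$; this is where the four-point hypothesis on triples $\{i_0,j_0,i,n\}$ (and $\{i_0,i,j,n\}$) is consumed, forcing the computed attachment point to be independent of the auxiliary pair. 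A secondary subtlety is that ${\cal T}'$ itself is only determined up to the moves that do not change $2$-weights (contracting/expanding zero-weight internal edges, relocating degree-$2$ vertices), so I would phrase the induction as producing \emph{some} realizing tree rather than a canonical one, and check that the four-point condition passes to the sub-family on $\{1,\dots,n-1\}$ trivially. Once $D_{i,n}({\cal T})=D_{i,n}$ for all $i$ is established, the remaining $2$-weights among $\{1,\dots,n-1\}$ are unchanged by the grafting, completing the induction.
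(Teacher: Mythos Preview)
The paper does not prove this theorem; it is quoted from \cite{B-S} as background in the introduction, so there is no in-paper proof to compare your attempt against.

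On its own merits your sketch follows the standard inductive route and is essentially correct in outline, but two points deserve attention. First, a minor slip in the necessity direction: with the H-topology separating $\{a,b\}$ from $\{c,d\}$ and central edge $e_0$, the three sums are $w(T')-w(e_0)$, $w(T')+w(e_0)$, $w(T')+w(e_0)$, not ``$w(T')$ twice and $w(T')+2w(e_0)$ once'' as you write; the conclusion that two coincide is of course unaffected.

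Second, and more substantively, the phrase ``the point at distance $\tfrac12(D_{i_0,n}+D_{j_0,n}-D_{i_0,j_0})$ from $i_0$ along the $i_0$--$j_0$ path'' is where the signed setting genuinely bites: with arbitrary real edge weights the cumulative-weight function along a path is piecewise linear but not monotone, so a prescribed value may have zero or several preimages, and your attachment point is not well-defined as stated. The fix is to locate the attachment \emph{edge} combinatorially first: for each internal vertex $v$ of the $i_0$--$j_0$ path choose a leaf $i$ whose path to $i_0$ passes through $v$, and let the four-point relation on $\{i_0,j_0,i,n\}$ (the ``odd one out'' among the three sums) decide whether $n$ lies on the $i_0$-side of $v$, on the $j_0$-side, or off the path through $i$'s branch. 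These tests are mutually consistent (this is exactly what the four-point hypothesis on quadruples containing $n$ buys) and single out one edge; then subdivide that edge with the unique real parameters making $D_{i_0,n}$ and $D_{j_0,n}$ correct, and verify the remaining $D_{i,n}$ as you outline. Your ``suitable extremal choice'' of $i_0,j_0$ should likewise be replaced by a combinatorial criterion (e.g.\ iterate into the branch whenever the test says ``off the path''), since the usual metric extremality arguments do not survive sign changes. Alternatively one can bypass grafting and read the full split system directly from the four-point data, then solve a linear system for the edge weights---closer in spirit to Bandelt and Steel's own monoid-valued proof.
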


 In \cite{Dress}
some results on graphs  with minimal total weight among the ones realizing a given metric space were established.

For higher $k$ the literature is more recent, see  \cite{B-R},  \cite{BC},
 \cite{H-H-M-S},   \cite{Iri}, \cite{L-Y-P},  \cite{Man}, \cite{P-S}, \cite{Ru1}, \cite{Ru2}.
Three of the most important results for higher $k$ are the following:

\begin{thm} {\bf  (Herrmann, Huber, Moulton, Spillner, \cite{H-H-M-S})}.
If $n \geq 2k$, a family  of positive real numbers 
$ \{D_{I}\}_{I \in {\{1,..., n\} \choose k}}$ is ip-l-treelike 
if and only if the restriction to every $2k$-subset of $\{1,...,n\}$ is 
ip-l-treelike.   
\end{thm}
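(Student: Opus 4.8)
The plan is to prove the two implications separately; only the converse needs work. (For $k=2$ the statement amounts to the four‑point condition, so it follows from Theorem~\ref{Bune}, and I will assume $k\ge 3$.)

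The ``only if'' implication is the standard fact that a restriction of an ip‑l‑treelike family is ip‑l‑treelike, and it uses neither $n\ge 2k$ nor $k\ge 3$. If ${\cal T}=(T,w)$ is internal‑positive‑weighted with leaf set $\{1,\dots,n\}$ and realizes $\{D_I\}$, and $Y$ has $2k$ elements, take the minimal subtree of $T$ spanning $Y$, suppress its degree‑$2$ vertices, and give each resulting edge the sum of the weights along the corresponding path of $T$. One checks that its leaf set is $Y$, that each of its internal edges corresponds to a path of $T$ joining two branch points (hence to a concatenation of internal edges of $T$, so the tree is internal‑positive‑weighted), and that $D_I$ is unchanged for every $I\in\binom{Y}{k}$.

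For the ``if'' implication I would induct on $n$; the base case $n=2k$ is the statement itself. Let $n\ge 2k+1$, set $X=\{1,\dots,n\}$ and $X_m=X\setminus\{m\}$. Each $2k$‑subset of $X_m$ is a $2k$‑subset of $X$ and $|X_m|=n-1\ge 2k$, so by induction $\{D_I\}_{I\in\binom{X_m}{k}}$ is ip‑l‑treelike, and since $3\le k\le(|X_m|+1)/2$ the uniqueness part of \cite{P-S} yields a \emph{unique} essential internal‑positive‑weighted tree ${\cal T}_m$ on $X_m$ realizing it. Fix the indices $1$ and $2$ and put $X_0:=X\setminus\{1,2\}$. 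The restrictions ${\cal T}_1|_{X_0}$ and ${\cal T}_2|_{X_0}$ are essential internal‑positive‑weighted trees on $X_0$ realizing $\{D_I\}_{I\in\binom{X_0}{k}}$; because $n\ge 2k+1$ we again have $3\le k\le(|X_0|+1)/2$, so \cite{P-S} forces ${\cal T}_1|_{X_0}={\cal T}_2|_{X_0}=:{\cal T}_0$. Define ${\cal T}$ to be the tree on $X$ obtained from ${\cal T}_0$ by attaching the leaf $1$ at the point of the metric tree ${\cal T}_0$ where $1$ sits in ${\cal T}_2$ (with the pendant weight it has there) and the leaf $2$ at the point where $2$ sits in ${\cal T}_1$. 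This is unambiguous, because both attachment loci are points of the same fixed metric tree ${\cal T}_0$, so no ordering question arises even if they fall on one edge; a routine check gives that ${\cal T}$ is essential and internal‑positive‑weighted and that ${\cal T}|_{X_1}={\cal T}_1$, ${\cal T}|_{X_2}={\cal T}_2$.

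It remains to see that ${\cal T}$ realizes $\{D_I\}_{I\in\binom{X}{k}}$. If $\{1,2\}\not\subseteq I$ then $I\subseteq X_1$ or $I\subseteq X_2$, so $D_I({\cal T})$ equals $D_I({\cal T}_1)=D_I$ or $D_I({\cal T}_2)=D_I$. Now let $\{1,2\}\subseteq I$ and choose a $2k$‑subset $Y\supseteq I$ of $X$ (possible since $|X\setminus I|=n-k\ge k$); by hypothesis some essential internal‑positive‑weighted tree $S_Y$ realizes $\{D_J\}_{J\in\binom{Y}{k}}$. Since $Y\setminus\{1\}\subseteq X_1$ we have ${\cal T}|_{Y\setminus\{1\}}={\cal T}_1|_{Y\setminus\{1\}}$, which realizes $\{D_J\}_{J\in\binom{Y\setminus\{1\}}{k}}$; the same family is realized by $S_Y|_{Y\setminus\{1\}}$, and as $3\le k\le((2k-1)+1)/2$ the uniqueness of \cite{P-S} gives ${\cal T}|_{Y\setminus\{1\}}=S_Y|_{Y\setminus\{1\}}$; symmetrically ${\cal T}|_{Y\setminus\{2\}}=S_Y|_{Y\setminus\{2\}}$, and restricting once more ${\cal T}|_{Y\setminus\{1,2\}}=S_Y|_{Y\setminus\{1,2\}}$. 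But, exactly as in the construction of ${\cal T}$, an essential weighted tree on $Y$ is reconstructed from its restrictions to $Y\setminus\{1\}$ and to $Y\setminus\{2\}$ by re‑attaching $1$ and $2$ to this common fixed subtree (its overlap $Y\setminus\{1,2\}$ has $2k-2\ge 3$ leaves); therefore ${\cal T}|_Y=S_Y$, whence $D_I({\cal T})=D_I({\cal T}|_Y)=D_I(S_Y)=D_I$. This completes the induction.

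The heart of the argument is this last verification of the $k$‑weights $D_I$ with $\{1,2\}\subseteq I$, where the two ``halves'' ${\cal T}_1,{\cal T}_2$ carry no direct information: it works because a tree is pinned down by two one‑leaf‑deleted restrictions whose overlap has at least three leaves, and because every use of \cite{P-S} stays in its range --- which is exactly why one needs $n\ge 2k+1$ in the inductive step (so that $|X_0|=n-2\ge 2k-1$) and why the $2k$‑subsets of the hypothesis are the right size (deleting one leaf leaves $2k-1$, still admissible). The bookkeeping that ``essential'' and ``internal‑positive‑weighted'' persist through the suppression and amalgamation operations is routine.
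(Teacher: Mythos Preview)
The paper does not give a proof of this theorem; it is quoted in the introduction as a result of Herrmann, Huber, Moulton and Spillner \cite{H-H-M-S}, with no argument supplied. So there is nothing in the paper to compare your attempt against, and I comment only on its correctness.

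Your argument has a real gap at the last step, the claim that ``an essential weighted tree on $Y$ is reconstructed from its restrictions to $Y\setminus\{1\}$ and $Y\setminus\{2\}$''. This fails exactly when $1$ and $2$ are neighbours: deleting either leaf suppresses their common stalk and loses its length. Concretely, with $Y=\{1,\dots,6\}$, let $A$ have a node $v$ joined by weight-$1$ edges to $3,4,5,6$ and to a second node $w$, with $w$ joined by weight-$1$ edges to $1$ and $2$; let $B$ be the star at $v$ with pendant weights $2,2,1,1,1,1$ to $1,\dots,6$. Both are essential and internal-positive-weighted, and $A|_{Y\setminus\{i\}}=B|_{Y\setminus\{i\}}$ for $i=1,2$ (each is the star with weights $2,1,1,1,1$), yet $D_{1,2,3}(A)=4\ne 5=D_{1,2,3}(B)$. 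The same phenomenon breaks your construction of ${\cal T}$ itself: take $n=7$, $k=3$ (so $n=2k+1$, the first inductive step), and let $\{D_I\}$ be the $3$-weights of the tree obtained from $A$ by attaching a seventh leaf of weight $1$ at $v$. Then your ${\cal T}_0,{\cal T}_1,{\cal T}_2$ are all stars, your amalgamated ${\cal T}$ is the star on $[7]$ with pendant weights $2,2,1,1,1,1,1$, and $D_{1,2,3}({\cal T})=5\ne 4=D_{1,2,3}$; so ${\cal T}$ does not realize the family. The missing ingredient is a third witness: any ${\cal T}_m$ with $m\notin\{1,2\}$ still contains both leaves $1$ and $2$ and therefore records whether they form a cherry and what the stalk length is. Without feeding that information into the amalgamation you cannot certify the values $D_I$ for $I\supseteq\{1,2\}$, and the appeal to ``re-attaching $1$ and $2$'' does not establish ${\cal T}|_Y=S_Y$.
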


 \begin{thm} \label{LYP} {\bf (Levy-Yoshida-Pachter,  \cite{L-Y-P})} Let ${\cal T}=(T,w)$ be a positive-weighted tree 
  with $L(T)=[n]$. For any $i ,j \in [n]$, define $$S(i,j) = \sum_{Y \in {[n] -\{i,j\} \choose k-2}} D_{i,j ,Y} ({\cal T}). $$ Then there exists a positive-weighted tree ${\cal T}' =(T',w')$   such that $D_{i,j}({\cal T}')=S(i,j)$  for all $i,j \in [n]$,
  the quartet system of $T'$ is contained in  the quartet system of $T$ and, defined $T_{\leq s}$ the subforest of $T$  whose edge set consists of edges whose removal results in one of the components having size at most $s$, we have 
  $T_{\leq n-k} \cong T'_{\leq n-k}$.
  
  \end{thm}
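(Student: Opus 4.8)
The plan is to write each number $S(i,j)$ explicitly in terms of the weights of $T$ and then to recognise the resulting expression, up to a correction depending on one leaf at a time, as a tree metric. Put $N=\binom{n-2}{k-2}$, and for $e\in E(T)$ let $P_e\mid Q_e$ be the partition of $[n]$ obtained by deleting $e$. The minimal subtree connecting a set $Z\subseteq[n]$ contains $e$ exactly when $Z$ meets both $P_e$ and $Q_e$, so the coefficient of $w(e)$ in $S(i,j)=\sum_{Y\in\binom{[n]\setminus\{i,j\}}{k-2}}D_{i,j,Y}({\cal T})$ is the number of $(k-2)$-subsets $Y$ of $[n]\setminus\{i,j\}$ such that $\{i,j\}\cup Y$ meets both sides of $e$. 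Counting these gives
$$S(i,j)=\sum_{e\in E(T)}\bigl(N-g_e(i,j)\bigr)\,w(e),$$
where $g_e(i,j)=\binom{|P_e|-2}{k-2}$ if $i,j\in P_e$, $g_e(i,j)=\binom{|Q_e|-2}{k-2}$ if $i,j\in Q_e$, and $g_e(i,j)=0$ if $e$ separates $i$ and $j$, with the convention $\binom{m}{k-2}=0$ for $m<k-2$. Now fix $e$, write $a=\binom{|P_e|-2}{k-2}$, $b=\binom{|Q_e|-2}{k-2}$, and set $p_i=1$ if $i\in P_e$ and $p_i=0$ otherwise; then $g_e(i,j)=a\,p_ip_j+b(1-p_i)(1-p_j)$, while the function $\delta_e(i,j)$ equal to $1$ when $e$ separates $i$ and $j$ and $0$ otherwise equals $p_i+p_j-2p_ip_j$, and an elementary computation yields
$$g_e(i,j)=-\tfrac12(a+b)\,\delta_e(i,j)+b+\tfrac12(a-b)\,p_i+\tfrac12(a-b)\,p_j .$$
The crucial point is that the bilinear term $p_ip_j$ cancels. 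Multiplying by $w(e)$, summing over $e$ and collecting the one-leaf terms into a function $\psi$, one obtains
$$S(i,j)=D_{i,j}(T,\tilde w)+\psi(i)+\psi(j),\qquad \tilde w(e):=\tfrac12\Bigl(\tbinom{|P_e|-2}{k-2}+\tbinom{|Q_e|-2}{k-2}\Bigr)w(e)\ge 0 ,$$
where $D_{i,j}(T,\tilde w)$ denotes the $2$-weight of the weighted tree $(T,\tilde w)$. In particular $\{S(i,j)\}$ is treelike: it is realised by the tree obtained from $T$ by putting the weights $\tilde w$ on the internal edges and lengthening the pendant edge at each leaf $i$ by $\psi(i)$.

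Let ${\cal T}'=(T',w')$ be this tree with all edges of weight $0$ contracted; we may assume $T$, hence $T'$, essential. Its internal edges are exactly the edges $e$ of $T$ with $\tilde w(e)>0$, i.e. with $\max(|P_e|,|Q_e|)\ge k$, and they have positive weight by construction. For the pendant edges: the pendant edge $e_i$ at a leaf $i$ has $P_{e_i}=\{i\}$ and $Q_{e_i}=[n]\setminus\{i\}$, hence $\tilde w(e_i)=\tfrac12\binom{n-3}{k-2}w(e_i)>0$, and here the hypothesis $k\le n-1$ is exactly what guarantees $\binom{n-3}{k-2}>0$. Moreover, for any three distinct leaves $i,j,k$ the formula above shows that every edge of $T$ contributes a nonnegative amount to $S(i,j)+S(i,k)-S(j,k)$, while $e_i$ contributes $\bigl(N+\binom{n-3}{k-2}\bigr)w(e_i)>0$; thus $S$ satisfies all triangle inequalities strictly. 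Since ${\cal T}'$ is essential and realises $S$, the weight of its pendant edge at $i$ equals $\tfrac12$ of the minimum of $S(i,j)+S(i,k)-S(j,k)$ over distinct $j,k\in[n]\setminus\{i\}$, which is therefore positive. Hence ${\cal T}'$ is a positive-weighted tree with $L(T')=[n]$ realising the given family.

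It remains to compare $T'$ with $T$. The quartet displayed by a weighted tree on four of its leaves depends only on the total weight of the internal path of that quartet; for the weighting $\tilde w$ this path is an internal path of $T$, and it has positive total weight precisely when one of its edges $e$ has $\tilde w(e)>0$, in which case $w(e)>0$ and so the quartet is displayed by $T$ as well. As passing from $(T,\tilde w)$ to ${\cal T}'$ (contracting the zero-weight internal edges and adjusting pendant lengths) does not change which quartets are displayed, the quartet system of $T'$ is contained in that of $T$. Finally, $\tilde w(e)>0$ iff $\max(|P_e|,|Q_e|)\ge k$ iff $\min(|P_e|,|Q_e|)\le n-k$; thus the edges of $T$ surviving in $T'$ are exactly the edges whose removal leaves a component of size $\le n-k$, that is, the edges of $T_{\le n-k}$, and an edge of $T_{\le n-k}$ and its image in $T'$ induce the same bipartition of $[n]$. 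Since every edge of $T'$ satisfies this size bound we have $T'_{\le n-k}=T'$, and comparing the induced bipartitions gives $T_{\le n-k}\cong T'_{\le n-k}$.

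The main obstacle is the identity for $g_e$: the cancellation of the bilinear terms $p_ip_j$ is precisely what makes $S(i,j)-D_{i,j}(T,\tilde w)$ additive and hence exhibits $\{S(i,j)\}$ as a tree metric — a priori there is no reason the transformation $D\mapsto S$ should produce one. After that, the positivity of the pendant weights, which is exactly where $k\le n-1$ is used, is the only delicate step; the quartet containment and the identification of $T_{\le n-k}$ then follow formally from $\tilde w(e)\ge 0$ and from the size dichotomy governing when $\tilde w(e)>0$.
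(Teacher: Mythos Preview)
The paper does not prove this theorem: it is quoted as a result of Levy, Yoshida and Pachter, with a citation to \cite{L-Y-P} and no argument given. There is therefore no proof in the paper to compare your proposal against.

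That said, your argument stands on its own and is essentially correct. The edge-by-edge count giving $S(i,j)=\sum_e (N-g_e(i,j))w(e)$ is right, and the identity rewriting $g_e(i,j)$ as $-\tfrac12(a+b)\delta_e(i,j)$ plus a constant plus two one-leaf terms is the heart of the matter; I checked all three cases and it holds. Two small points of presentation. First, you reuse the letter $k$ for a leaf in the triangle-inequality step, clashing with the parameter; call it $\ell$. Second, there is a faint circularity in the pendant-weight argument: you form ${\cal T}'$ by contracting all weight-zero edges \emph{before} you have shown the pendant weights are nonzero. It is cleaner to argue directly in $(T,w'')$: choose $j,\ell$ lying in distinct branches at the node adjacent to $i$, so that $S(i,j)+S(i,\ell)-S(j,\ell)=2w''(e_i)$, and your nonnegativity-plus-strict-$e_i$-term computation then gives $w''(e_i)>0$ outright; only afterwards contract the zero-weight internal edges.

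One genuine caveat: your last line reads ``comparing the induced bipartitions gives $T_{\le n-k}\cong T'_{\le n-k}$''. What you have actually shown is that the two subforests induce the same set of splits of $[n]$. As abstract graphs they need not be isomorphic: $T_{\le n-k}$ is obtained from $T$ by \emph{deleting} the offending edges and can be disconnected, whereas your $T'_{\le n-k}=T'$ is obtained by \emph{contracting} them and is a tree. This is the intended phylogenetic meaning of the statement (equality of the corresponding split systems, equivalently of the trees after suppressing degree-$2$ vertices), but you should say so explicitly rather than write $\cong$ without comment.
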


\begin{thm} \label{PatSp} {\bf (Pachter-Speyer, \cite{P-S})}. Let $ k ,n  \in \mathbb{N}$ with
$3 \leq  k \leq (n+1)/2$.  A positive-weighted tree
 ${\cal T}$ with leaves $1,...,n$ and no vertices of degree 2
is determined by the values $D_I({\cal T})$, where $ I $ varies in  
${\{1,...,n\} \choose k }$.
\end{thm}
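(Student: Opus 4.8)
The plan is to reduce Theorem \ref{PatSp} to the classical reconstruction of a tree from its leaf-metric, by showing that the $k$-weights already determine all $2$-weights. Precisely, I would prove: if $3\le k\le(n+1)/2$, then for a positive-weighted tree $\mathcal{T}$ with leaves $1,\dots,n$ and no degree-$2$ vertex, the family $\{D_I(\mathcal{T})\}_{I\in\binom{\{1,\dots,n\}}{k}}$ determines $D_{a,b}(\mathcal{T})$ for every pair $a\neq b$. Granting this, two such trees $\mathcal{T},\mathcal{T}'$ with the same $k$-weights realize the same metric on $\{1,\dots,n\}$, and since a positive-weighted tree without degree-$2$ vertices is the unique positive-weighted tree with leaves $1,\dots,n$ and no degree-$2$ vertex realizing its leaf-metric (the uniqueness half of the Buneman reconstruction underlying Theorem \ref{Bune}), we get $\mathcal{T}\cong\mathcal{T}'$ as weighted trees.

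First I would reduce to the extremal case $n=2k-1$, which is exactly where the hypothesis $k\le(n+1)/2$, i.e.\ $n\ge 2k-1$, is used. Fix $a\neq b$, pick $S$ with $\{a,b\}\subseteq S\subseteq\{1,\dots,n\}$ and $|S|=2k-1$, and let $\mathcal{T}_S$ be the positive-weighted tree on leaf set $S$ obtained from $T$ by discarding everything off the leaf-to-leaf paths within $S$ and suppressing degree-$2$ vertices; then $\mathcal{T}_S$ has no degree-$2$ vertex and $D_I(\mathcal{T}_S)=D_I(\mathcal{T})$ for every $k$-subset $I\subseteq S$, so its $k$-weights are part of the given data and it suffices to recover $D_{a,b}(\mathcal{T}_S)$. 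Thus assume $n=2k-1$.

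For $n=2k-1$ I would pass to the split description and exploit a structural fact. Regard each edge $\sigma$ of $T$ as a split of $\{1,\dots,n\}$ with ``small side'' $B_\sigma$, the side of cardinality $\le(n-1)/2=k-1$ (a singleton for a pendant edge). A $k$-subset $I$ crosses $\sigma$ exactly when $I\cap B_\sigma\neq\emptyset$, because $|I|=k>|B_\sigma|$ forces $I$ to meet the large side; hence, with $W$ the total weight of $\mathcal{T}$ and $J:=\{1,\dots,n\}\setminus I$ a generic $(k-1)$-subset,
\begin{equation*}
D_I(\mathcal{T})=W-\sum_{\sigma\,:\,B_\sigma\subseteq J}w(\sigma),
\end{equation*}
so the partial sums $g(J):=\sum_{\sigma\,:\,B_\sigma\subseteq J}w(\sigma)=W-D_{\{1,\dots,n\}\setminus J}(\mathcal{T})$ are known for all $(k-1)$-subsets $J$, up to the single constant $W$. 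The structural fact is that $|B_\sigma|+|B_\tau|\le 2(k-1)<n$, so the complements of any two small sides meet, and compatibility of the splits of $T$ then forces $B_\sigma$ and $B_\tau$ to be nested or disjoint: the small sides form a laminar family. One then recovers this laminar family and the weights $w(\sigma)$ — hence the weighted tree, hence $D_{a,b}$ — from the values $g(J)$ by peeling along the laminar order. For $k=3$, $n=5$ this last part is an explicit linear computation: the $3$-weights give $D_{a,b}-D_{c,d}=D_{a,b,c}+D_{a,b,d}-D_{a,c,d}-D_{b,c,d}$ for disjoint pairs, and $D_{a,b,c}=\tfrac12(D_{a,b}+D_{a,c}+D_{b,c})$ pins down the remaining additive constant.

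The step I expect to be the main obstacle is precisely this peeling/reconstruction from the $g(J)$: as a plain system of linear equations it is underdetermined once $k\ge 4$ (there are more candidate small sides than $(k-1)$-subsets), so laminarity must be used essentially, organizing the elimination so that the weight of each small side is extracted after the weights of all small sides strictly contained in it. It is also exactly at $k=(n+1)/2$ that the mechanism breaks — for larger $k$ an internal edge can have both sides of cardinality $\ge k$, its crossing pattern is no longer ``$I$ meets $B_\sigma$'', and uniqueness fails, which is the phenomenon the rest of the paper quantifies via pseudostars. Two shortcuts are worth recording. Since for $k\le(n+1)/2$ every essential positive-weighted tree on $n$ leaves is in particular an internal-positive-weighted essential pseudostar of kind $(n,k)$, Theorem \ref{PatSp} is the uniqueness assertion of the paper's main theorem (announced in the abstract) specialized to this range; and, alternatively, one may invoke Theorem \ref{LYP}: the quantity $S(a,b)=\sum_{Y\in\binom{\{1,\dots,n\}\setminus\{a,b\}}{k-2}}D_{a,b,Y}(\mathcal{T})$ is computable from the $k$-weights, and since every edge of $T$ has smaller leaf-side of size $\le\lfloor n/2\rfloor\le n-k$ we have $T_{\le n-k}=T$, which being connected forces $T'_{\le n-k}=T'$, so that theorem gives $T'\cong T$; the weights $w$ are then the unique solution of the topology-determined linear system $S(a,b)=\sum_{e}w(e)\,N_e(a,b)$ with $N_e(a,b)=\#\{Y:\{a,b\}\cup Y\text{ crosses }e\}$.
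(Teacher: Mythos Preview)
In this paper Theorem~\ref{PatSp} is quoted from \cite{P-S}; the paper's own argument for it is the specialization of Theorem~\ref{main}: for $k\le (n+1)/2$ every essential tree with $n$ leaves is already a pseudostar of kind $(n,k)$, and a positive-weighted tree is internal-nonzero-weighted, so the uniqueness clause of Theorem~\ref{main} gives the result (stated explicitly as the Corollary right after Theorem~\ref{main}). The machinery behind that proof is Propositions~\ref{starbell} and~\ref{k=3} (packaged as Corollary~\ref{Bunem}): the $k$-weights are shown to determine the Buneman indices of the tree directly, hence the topology, and then the edge weights are read off by the explicit formulas displayed in the proof of Theorem~\ref{main}. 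You record exactly this route as your first ``shortcut'', and the paper itself records your second shortcut via Theorem~\ref{LYP} in the sentence following the Corollary.

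Your \emph{main} proposal, however, is a genuinely different strategy: restrict to a $(2k-1)$-element leaf set to reduce to $n=2k-1$, rewrite $D_I$ as $W-g(J)$ with $J=[n]\setminus I$, and reconstruct the weighted tree from the values $g(J)$ on $(k-1)$-subsets using that the small sides $B_\sigma$ form a laminar family. This is closer in spirit to the original Pachter--Speyer argument than to what the present paper does; its advantage is that it isolates the role of the bound $k\le(n+1)/2$ very cleanly (it is exactly what makes every split have a well-defined small side of size $\le k-1$), while the paper's approach has the advantage that it works uniformly for all $k\le n-1$ and for arbitrary real weights, at the price of introducing the pseudostar notion.

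That said, the peeling step in your main approach is not yet a proof. You only have access to $g$ on subsets of the single size $k-1$, and only up to an unknown additive constant $W$; you have not explained how, from this data alone, one identifies which subsets actually occur as small sides $B_\sigma$, nor how one extracts the weight of a small side of size strictly less than $k-1$ (there is no smaller $J$ to evaluate $g$ on). Saying ``organize the elimination along the laminar order'' presupposes that the laminar family is already known, which it is not. For $k=3$ you close this gap with the identity $D_{a,b,c}=\tfrac12(D_{a,b}+D_{a,c}+D_{b,c})$; for $k\ge 4$ the analogous step is the substantive content and is missing. If you want to complete this line, one clean way is to prove the inductive reduction ``$k$-weights determine $(k-1)$-weights when $n\ge 2k-1$'' and iterate down to $k=2$; otherwise, the argument the paper gives (your first shortcut) is the one that actually goes through here.
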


Pachter-Speyer Theorem raises naturally some questions.
It is natural to ask what happens if $k>(n+1)/2$: Pachter and Speyer 
showed that in this case  the statement does not hold, but is there a class
of trees such that at most one tree in this class realizes a given family $\{D_I \}_{ I \in {\{1,...,n\} \choose k }}$?

Furthermore we can wonder what happens if we consider general weighted trees instead of positive-weighted ones.

Finally,  if a family $\{D_I \}_{ I \in {\{1,...,n\} \choose k }}$ is realized by more than one (positive-)weighted tree, can we say something about the ones, 
among the weighted trees realizing the family, that have maximum/minimum total weight (if they exist)? 

To state our results we need the following definition.

\begin{defin} Let $k\in \N- \{0\}$.  We say that  a tree $P$ is a {\bf pseudostar} of kind $(n,k)$ if $\ L(P)=n$ and
   any edge  of $P$ divides $L(P)$
 into two sets such that  at least one of them has cardinality   greater than or equal to $ k$.
\end{defin}

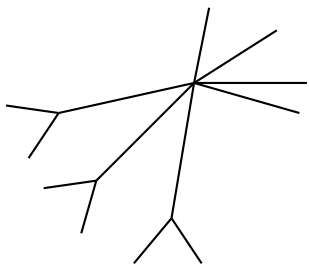
\begin{figure}[h!]
\begin{center}

\begin{tikzpicture}
\draw [thick] (0,0) --(0.2,1);
\draw [thick] (0,0) --(1.1,0.7);
\draw [thick] (0,0) --(1.5,0);
\draw [thick] (0,0) --(1.4,-0.4);
\draw [thick] (0,0) --(-1.8,-0.4);
\draw [thick] (-1.8,-0.4) --(-2.5,-0.3);
\draw [thick] (-1.8,-0.4) --(-2.2,-1);
\draw [thick] (0,0) --(-1.3,-1.3);
\draw [thick] (-1.3,-1.3) --(-2,-1.4);
\draw [thick] (-1.3,-1.3) --(-1.5,-2);
\draw [thick] (0,0) --(-0.3,-1.8);
\draw [thick] (-0.3,-1.8) --(-0.8,-2.4);
\draw [thick] (-0.3,-1.8) --(0.1,-2.4);

\end{tikzpicture}

\caption{ A pseudostar of kind $(10,8)$ \label{pseudostar}}
\end{center}
\end{figure}

We prove that, if $3 \leq k \leq n-1$,  given a l-treelike family of  real numbers,  $\{ D_I\}_{ I  \in {\{1,...,n\} \choose k }}$, 
there exists exactly one internal-nonzero-weighted pseudostar ${\cal P}$ of kind $(n,k) $ with leaves $1,...,n$ and no vertices of degree 2  
such that  $D_I({\cal P})=D_I$ for any $ I  $; it is positive-weighted if the family is 
p-l-treelike. Moreover,  any other tree realizing the family $\{D_I\}_I $ and without vertices of degree $2$  is obtained 
from the pseudostar  by a certain kind of operations we call ``OI operations'' 
and by inserting some internal edges of weight $0$ (see Definition \ref{IO} and Theorem \ref{main}). 
In particular we get that the statement of Pachter-Speyer Theorem holds
also for general weighted trees.

 Finally, in \S4, given a p-l-treelike family  $\{ D_I\}_{ I  \in {\{1,...,n\} \choose k }}$ in the set of positive real numbers,  we examine the range of the total weight of the
trees realizing it and we show that the pseudostar of kind $(n,k) $ realizing it has maximum total weight
(see Theorem \ref{totpositive}); then  we study the analogous problem for l-treelike families in $\R$
(see Theorem \ref{totnonzero}).


\section{Notation and some remarks}

\begin{nota} \label{notainiziali}

$\bullet$  Let $ \mathbb{R}_{+} =\{x \in \mathbb{R} | \; x >0\}$.


$\bullet$  For any $n \in \N $ with $ n \geq 1$, let $[n]= \{1,..., n\}$.

$\bullet$  For any set $S$ and $k \in \mathbb{N}$,  let ${S \choose k}$
be the set of the $k$-subsets of $S$.

$\bullet$ Throughout the paper, the word ``tree'' will denote a finite  tree.

$\bullet$  We say that a vertex of a tree is a {\bf node} if its degree is greater than $2$.
  
$\bullet$  Let $F$ be a leaf of a tree $T$. Let $N$ be the node 
 such that the path $p$ between $N$ and $F$ does not contain any node apart from $N$. We say that $p$ is the {\bf twig} associated to $F$.
We say that an edge is {\bf internal} if it is not an edge of a twig.


$\bullet $ We say that a tree is {\bf essential} if it has no vertices of degree $2$.

$\bullet $ Let $T$ be a tree and let $\{i,j\} \in E(T)$. We say that a tree $T'$  is obtained from $T$ 
by {\bf contracting} $\{i,j\}$ if there exists a map $ \varphi : V(T) \rightarrow V(T') $ 
such that:
\begin{description}\itemsep0.5pt
\item $\varphi (i) = \varphi (j) $, 
\item $ \varphi^{-1} (y) $ is a set with only one element for any $y \neq \varphi (i)$,
\item $E(T')= \big\{ \{\varphi(a), \varphi(b) \}| \;  \{a,b\} \in E(T)\; \mbox{with }  \varphi(a) \neq \varphi(b) \big\}$.
\end{description}
We say also that $T$ is obtained from $T'$ by {\bf inserting} an edge.

$\bullet$ Let $T$ be a tree and let $S$ be a subset of $L(T)$. We denote by $T|_S$ the minimal subtree 
of $T$ whose set of vertices  contains $S$. If ${\cal T}= (T,w)$ is a weighted tree, we denote by 
${\cal T}|_S$  the tree $T|_S$ with the weight induced by $w$.

$\bullet $  Let ${\cal T}=(T,w)$ be a weighted tree. We denote $w(T)$ by $D_{tot} ({\cal T})$ and we call it {\bf total weight}
of ${\cal T}$. 

$\bullet$ Let $n ,k \in \mathbb{N},\;\; n \geq 3$ and $1<k <n$.  Given a family of real numbers $\{D_I\}_{I \in {[n] \choose k }}$, we say that a {\bf weighted tree} ${\cal T}=(T,w)$ with $L(T)=[n]$ {\bf realizes the family $\{D_I\}_{I }$} if 
$D_I ({\cal T}) =D_I$ for any $I \in {[n] \choose k }$.
\end{nota}

\begin{defin} \label{cherries} Let $T$ be a tree.

We say that two leaves  $i$ and $j$ of $T$ are {\bf neighbours}
if in the path from $i$ to $j$ there is only one node; 
furthermore, we say that  $C \subset L(T)$ is a {\bf cherry} if any $i,j \in C$ are neighbours.

We say that a cherry  is {\bf complete}
 if it is not strictly  contained in another cherry.

The {\bf stalk} of a cherry is the unique node in the path 
with endpoints any two elements of the cherry.

Let  $i,j,l,m \in L(T)$. We say that $ \langle
i, j | l, m \rangle $ holds if  in  $T|_{\{i,j,l,m\}}$ we have that $i$ and $j$ are neighbours, 
 $l$ and $m$ are neighbours, and  $i$ and $l$ are not neighbours; in this case we denote by  $\gamma_{i,j,l,m}$ the path 
between the stalk of $\{i,j\}$ and the stalk of $ \{l,m\}$ in $T|_{\{i,j,l,m\}}$. The symbol 
$ \langle i, j | l, m \rangle $  is called {\bf Buneman's index} of $i,j,l,m$. 

\end{defin}

\begin{rem}
(i) A pseudostar of kind $(n,n-1) $ is a star, that is, a tree with only one node.



(ii) Let $ k,n \in \N-\{0\}$. If $\frac{n}{2} \geq k $, then every tree 
with $n$ leaves is a pseudostar of kind $(n,k)$, in fact if we divide a set with $n$ elements into two 
parts, at least one of them has cardinality greater than or equal to $ \frac{n}{2}$, which 
is greater than or equal to $ k$. 
\end{rem}

\begin{defin} \label{IO} Let $k,n \in \N -\{0\}$. Let ${\cal T}=(T,w) $ be a weighted tree with $L(T)=[n]$.  Let $e$ be an   edge of $T$  with weight $y$ and dividing $[n]$ into two sets 
 such that each of them has strictly less than $k$ elements.
Contract $e$   and add $y/k$ to the weight of every twig of the tree. 
 We call this operation a $k$-IO operation on ${\cal T}$
and we call the inverse operation a  $k$-OI operation.
\end{defin}

\begin{rem} \label{IOOIw}
It is easy to check that, if ${\cal T}= (T,w) $ and  ${\cal T}'= (T',w') $  are weighted trees with $L(T)= L(T')=[n]$  and ${\cal T}'$ is obtained from ${\cal T} $ by a $k$-IO operation on an edge $e$ of weight  $y$, we have that ${\cal T}$ and ${\cal T}'$ have the same $k$-dissimilarity vector.
Furthermore,  if  ${\cal T} $ is positive-weighted we have that $ D_{tot} ({\cal T}')>  D_{tot}({\cal T})$, precisely 
$$ D_{tot}({\cal T}')= D_{tot}({\cal T}) + \frac{n-k}{k} y  .$$ 
\end{rem}

{\bf Example.} Let  $k=5$, $n=8$. Consider the weighted trees in Figure 
\ref{IOop}, where the labelled vertices are the numbers in bold and the
other numbers are the weights.
The tree on the left 
 is not a pseudostar of kind $(8,5)$ because of the edge $e$;
the tree on the right is obtained from the one on the left by 
a $5$-IO operation on $e$. The $5$-weights of the two trees are the same.

\begin{figure}[h!]

\begin{center}
\begin{tikzpicture} \label{IOop}
\draw [thick] (-0.5,0) --(0.5,0);
\draw [thick] (-0.5,0) --(-1,0.5);
\draw [thick] (-0.5,0) --(-1,-0.5);
\draw [thick] (0.5,0) --(1,0.5);
\draw [thick] (0.5,0) --(1,-0.5);
\draw [thick] (1,0.5) --(1.8,0.7);
\draw [thick] (1,0.5) --(0.8,1.2);
\draw [thick] (1,-0.5) --(1.8,-0.7);
\draw [thick] (1,-0.5) --(0.8,-1.2);
\draw [thick] (-1,0.5) --(-1.8,0.7);
\draw [thick] (-1,0.5) --(-0.8,1.2);
\draw [thick] (-1,-0.5) --(-1.8,-0.7);
\draw [thick] (-1,-0.5) --(-0.8,-1.2);
\node [above] at (0,0) {\small{$e$}};
\node [below] at (0,0) {\scriptsize{$10$}};
\node [left] at (-0.75,0.25)  {\scriptsize{$1$}};
\node [left] at (-0.75,-0.25)  {\scriptsize{$1$}};
\node [right] at (0.75,0.25)  {\scriptsize{$2$}};
\node [right] at (0.75,-0.25)  {\scriptsize{$3$}};
\node [below] at (1.5,0.65) {\scriptsize{$5$}}; 
\node [left] at (0.96,0.85)  {\scriptsize{$5$}}; 
\node [above] at (1.45,-0.7)  {\scriptsize{$6$}}; 
\node [left] at (0.95,-0.85)  {\scriptsize{$5$}}; 
\node [below] at (-1.45,0.65)  {\scriptsize{$6$}}; 
\node [right] at (-0.95,0.85)  {\scriptsize{$5$}}; 
\node [above] at (-1.45,-0.65)  {\scriptsize{$5$}};
\node [right] at (-0.95,-0.85)  {\scriptsize{$6$}};
\node [right] at (1.8,0.7) {${\mathbf{7}}$};
\node [above] at (0.8,1.2) {${\mathbf{8}}$};
\node [right] at (1.8,-0.7) {${\mathbf{6}}$};
\node [below] at (0.8,-1.2) {${\mathbf{5}}$};
\node [left] at (-1.8,0.7) {${\mathbf{2}}$};
\node [above] at (-0.8,1.2) {${\mathbf{1}}$};
\node [left] at (-1.8,-0.7) {${\mathbf{3}}$};
\node [below] at (-0.8,-1.2) {${\mathbf{4}}$};
\draw [thick] (5.5,0) --(5,0.5);
\draw [thick] (5.5,0) --(5,-0.5);
\draw [thick] (5,0.5) --(4.2,0.7);
\draw [thick] (5,0.5) --(5.2,1.2);
\draw [thick] (5,-0.5) --(4.2,-0.7);
\draw [thick] (5,-0.5) --(5.2,-1.2);
\draw [thick] (5.5,0) --(6,0.5);
\draw [thick] (5.5,0) --(6,-0.5);
\draw [thick] (6,0.5) --(6.8,0.7);
\draw [thick] (6,0.5) --(5.8,1.2);
\draw [thick] (6,-0.5) --(6.8,-0.7);
\draw [thick] (6,-0.5) --(5.8,-1.2);

\node [left] at (5.25,0.25)  {\scriptsize{$1$}}; 
\node [left] at (5.25,-0.25)  {\scriptsize{$1$}}; 
\node [below] at (4.55,0.65)  {\scriptsize{$8$}}; 
\node [right] at (5.05,0.85)  {\scriptsize{$7$}}; 
\node [above] at (4.55,-0.65)  {\scriptsize{$7$}}; 
\node [right] at (5.05,-0.85)  {\scriptsize{$8$}}; 
\node [right] at (5.75,0.25)  {\scriptsize{$2$}}; 
\node [right] at (5.75,-0.25)  {\scriptsize{$3$}}; 
\node [below] at (6.4,0.65)  {\scriptsize{$7$}}; 
\node [left] at (5.95,0.85)  {\scriptsize{$7$}}; 
\node [above] at (6.4,-0.65)  {\scriptsize{$8$}}; 
\node [left] at (5.95,-0.85)  {\scriptsize{$7$}}; 
\node [left] at (4.2,-0.7) {${\mathbf{3}}$};
\node [below] at (5.2,-1.2) {${\mathbf{4}}$};
\node [left] at (4.2,0.7) {${\mathbf{1}}$};
\node [above] at (5.2,1.2) {${\mathbf{2}}$};
\node [right] at (6.8,0.7) {${\mathbf{7}}$};
\node [above] at (5.8,1.2) {${\mathbf{8}}$};
\node [right] at (6.8,-0.7) {${\mathbf{6}}$};
\node [below] at (5.8,-1.2) {${\mathbf{5}}$};

\end{tikzpicture}
\caption{ a 5-IO operation}

\end{center}
\end{figure}
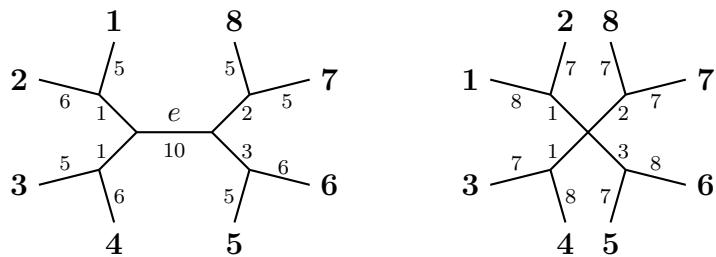


\section{Existence and uniqueness of a pseudostar realizing a treelike family}

The following proposition is the core of the proof of Theorem \ref{main}; it shows that  Buneman indices
of  \underline{weighted} pseudostars can be recovered from  their $k$-weights.
The result  was known only for \underline{positive-weighted} trees.

\begin{prop} \label{starbell}
  Let $k,n \in \N$ with $ 2 \leq k \leq n-2$.
Let ${\cal P}=(P,w)$ be a weighted tree with $L(P)=[n]$. 

 \smallskip
 
 1)  Let $i,l \in [n]$.  

(1.1) If $i,l $ are neighbours,  then 
$ D_{i,X} ({\cal P}) - D_{l,X} ({\cal P}) $ does not depend on  $ 
X \in {[n] - \{i,l\} \choose  k-1}$.

(1.2) If ${\cal P}$ is  an internal-nonzero-weighted essential  pseudostar of kind $(n,k)$,
then also the converse is true.

\smallskip

2)  Let $i,j, l,m \in [n]$. 

(2.1) 
If $\langle i,j | l,m \rangle$ holds or $P|_{i,j,l,m}$ is a star, then $$ D_{i,m,R} ({\cal P})  + D_{j,l,R}({\cal P}) = D_{i,l,R} ({\cal P})  + D_{j,m,R}({\cal P}) $$
for any $R \in { [n]- \{i,j,l,m\}  \choose k-2}$.

(2.2) Let $ k \geq 4 $ and
 ${\cal P}$  be  an internal-nonzero-weighted essential pseudostar of kind $(n,k)$.  We have that $\langle i,j | l,m  \rangle$ holds if and only if at least one of the following conditions holds:

(a) $\{i,j\} $ and $\{l,m\}$ are complete cherries in $P$,

(b) there exists $S \in { [n]- \{i,j,l,m\}  \choose k-2}$ such that  
\begin{equation} \label{dis} D_{i,j,S} ({\cal P})  + D_{l,m,S}({\cal P}) \neq  D_{i,l,S} ({\cal P})  + D_{j,m,S}({\cal P}) .\end{equation}
and there exists $R \in { [n]- \{i,j,l,m\}  \choose k-2}$ such that  
\begin{equation} \label{dis2} D_{i,j,R} ({\cal P})  + D_{l,m,R}({\cal P}) \neq  D_{i,m,R} ({\cal P})  + D_{j,l,R}({\cal P}) .\end{equation}


\end{prop}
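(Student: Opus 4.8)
The plan is to reduce everything to understanding how the weight of a minimal subtree spanning a set $A\cup R$ decomposes along edges. Fix a weighted pseudostar ${\cal P}=(P,w)$. For any edge $e$ of $P$ and any subset $A$ of leaves, $e$ lies in $P|_A$ precisely when $A$ has leaves on both sides of $e$. Hence, writing $e$ as splitting $[n]$ into $L_e$ and $R_e$, we have
$$D_A({\cal P})=\sum_{e:\, A\cap L_e\neq\emptyset,\ A\cap R_e\neq\emptyset} w(e).$$
So all the linear identities in part (2.1) and the "independence of $X$" statement in (1.1) are really statements about, for each edge $e$, whether the indicator "$e$ is used" is the same on both sides of the claimed equality. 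I would first record this edge-by-edge principle as the computational engine for the whole proposition.

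For (1.1): if $i,l$ are neighbours with common node $N$, then every edge except the two twig-edges at $i$ and at $l$ separates $i$ from $l$ in the same way, so for any $X\subset[n]-\{i,l\}$ of size $k-1$ the only edges whose usage can differ between $D_{i,X}$ and $D_{l,X}$ are the twig at $i$ and the twig at $l$; since $|X|=k-1\ge1$ there is some element of $X$, and — here is where the pseudostar hypothesis and $k\le n-2$ matter — one checks the twig at $i$ is used in $D_{i,X}$ exactly when it is NOT used in $D_{l,X}$ and vice versa, independent of which $X$, giving $D_{i,X}-D_{l,X}=w(\mathrm{twig}_i)-w(\mathrm{twig}_l)$ (up to sign conventions to be pinned down). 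For the converse (1.2), suppose $i,l$ are not neighbours; then the path from $i$ to $l$ contains at least two nodes, and I would exhibit two sets $X, X'$ of size $k-1$ witnessing a different value of $D_{i,X}-D_{l,X}$: choose leaves to "activate" an interior edge of that path for one choice of $X$ and not for the other. The pseudostar-of-kind-$(n,k)$ condition is exactly what guarantees enough leaves are available on the relevant side to form such an $X$ of the required size $k-1$ (each edge has $\ge k$ leaves on one side), and internal-nonzero-weightedness ensures the differing edge contributes a nonzero amount, so the two values genuinely differ. Part (2.1) is the same bookkeeping: when $\langle i,j|l,m\rangle$ holds or $P|_{\{i,j,l,m\}}$ is a star, for every edge $e$ and every $R$ the total multiset of "activated" edges on the left side $\{i,m\},\{j,l\}$ matches that on the right side $\{i,l\},\{j,m\}$ — this is a small case analysis on where $e$ sits relative to the four-leaf topology, and the four-leaf topology of a tree is governed by the classical quartet/Buneman combinatorics.

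Part (2.2) is the main obstacle and needs the two separate witnesses (\ref{dis}) and (\ref{dis2}) precisely because a single inequality cannot distinguish $\langle i,j|l,m\rangle$ from the degenerate configurations. The plan: the "only if" direction is immediate from (2.1) (if $\langle i,j|l,m\rangle$ holds and (a) fails, then (b) must be shown to hold — I would argue that failure of (a) means at least one of $\{i,j\},\{l,m\}$ is not a complete cherry, so the path $\gamma_{i,j,l,m}$ together with a non-neighbour leaf lets us activate an edge on the relevant side, producing both required inequalities by suitable choices of $S$ and $R$; here $k\ge4$, i.e. $k-2\ge2$, gives room to place the auxiliary leaves and $n\ge k+2$ guarantees they exist). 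The "if" direction: assume (a) or (b); I must rule out the other two four-leaf topologies $\langle i,l|j,m\rangle$, $\langle i,m|j,l\rangle$ and the star. If (a) holds, $\{i,j\}$ and $\{l,m\}$ being complete cherries forces $i,j$ neighbours and $l,m$ neighbours with distinct stalks (distinctness because, being complete, they cannot merge into one larger cherry unless $n$ is too small, excluded by $n\ge k+2\ge6$), which is exactly $\langle i,j|l,m\rangle$. If (b) holds: inequality (\ref{dis}) rules out both the star and $\langle i,l|j,m\rangle$ (for those, (2.1) applied with the roles permuted would force equality), and inequality (\ref{dis2}) rules out $\langle i,m|j,l\rangle$; the only surviving topology is $\langle i,j|l,m\rangle$. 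I expect the delicate points to be (i) the exact sign/indicator accounting in the edge-decomposition lemma, handled once and reused, and (ii) in (2.2), checking in the "only if" direction that when (a) fails one can simultaneously realize both (\ref{dis}) and (\ref{dis2}) with legitimately-sized subsets, which is where the hypotheses $k\ge4$ and $k\le n-2$ are genuinely used.
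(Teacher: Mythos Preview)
Your edge-by-edge indicator decomposition $D_A=\sum_e w(e)\,[A\text{ meets both sides of }e]$ is equivalent to the paper's direct subtree-weight computations, and the overall architecture---construct explicit witnessing sets and use the pseudostar hypothesis to guarantee enough leaves on the required side---is the same as the paper's.

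Two corrections. First, (1.1) needs no pseudostar hypothesis: the twig at $i$ is always used in $D_{i,X}$ and never in $D_{l,X}$ once $X\neq\emptyset$, for any tree; the paper just writes ``Obvious''. Second, and more substantively, the $\Rightarrow$ direction of (2.2) is \emph{not} ``immediate from (2.1)''---that is backwards; (2.1) is exactly what drives the $\Leftarrow$ direction (ruling out the star and the other two quartets), which you then describe correctly. The $\Rightarrow$ direction is where the real work lies, and your phrase ``a non-neighbour leaf lets us activate an edge on the relevant side'' does not yet pin down the construction. In your indicator language, only edges of $\gamma_{i,j,l,m}$ contribute to $D_{i,j,R}+D_{l,m,R}-D_{i,m,R}-D_{j,l,R}$, each with coefficient $[R\cap L_e\neq\emptyset]+[R\cap R_e\neq\emptyset]-2$; so you need $R$ to miss one side of \emph{exactly one} edge of $\gamma$ (else contributions may cancel). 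The paper does this by taking the edge $\{\overline{m'},\overline{m}\}$ adjacent to the stalk of $\{l,m\}$, invoking the pseudostar property on that edge to get $\geq k$ leaves (hence $\geq k-2$ in $[n]\setminus\{i,j\}$) with $\overline{x}\leq\overline{m'}$, and choosing $R$ among those so that it contains both a leaf $m'$ with $\overline{m'}$ adjacent to $\overline{m}$ \emph{and} a leaf $s$ with $\overline{s}\leq\overline{j}$; then $R$ straddles every edge of $\gamma$ except the last, isolating a single nonzero term. This is precisely where $k-2\geq 2$ is used. A symmetric argument along the $i$--$m$ path gives the second inequality; when no such $s,t$ exist on either end, that is exactly condition (a).
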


\begin{proof}  
(1.1) Obvious.

\medskip
(1.2)  Let ${\cal P}$ be  as in the assumptions
and suppose that 
$ D_{i,X} ({\cal P}) - D_{l,X} ({\cal P}) $ does not depend on  $ 
X \in {[n] - \{i,l\} \choose  k-1}$.
For every $ \delta \in [n]$, let $ \overline{\delta}$ be the 
node on the path from $i$ to $l $   such that 
$$ path(i,l) \cap path(i, \delta) = 
path(i , \overline{\delta}).$$
Suppose, contrary to our claim,  that $i$ and $l$  are not neighbours. 
Therefore, on the path between $i $ and $l $  there are at least two nodes.
 For any $a,b $ nodes in the path between
$i$ and $l$, we say that $ a \leq b$ if and only if  
$path(i, a) \subset path(i, b) $. 
Let $x, y$ be two nodes on the path between $i$ and $l $ such that 
there is no node in the path between $x$ and $y$ apart from $x$ and $y$; thus in 
the path between $x$ and $y$ there is only one edge since $P$ is essential.
Suppose $x < y$, see Figure \ref{tree}. 
We can divide $[n]$ into two disjoint subsets:

$X =\{\delta \in [n] \; |\; \overline{\delta} \leq x\}$,

$Y =\{\delta  \in [n] \; |\; \overline{\delta}\geq y \}$.

Since $P$ is a pseudostar of kind $(n,k)$, then  either $ \# X \geq k $ or $ \# Y \geq k$;
suppose  $ \# X \geq k $ (we argue analogously in the other case); 
let $\gamma_1,..., \gamma_{k-1}$ be distinct elements of $X -\{i\}$ with $ \overline{\gamma_{k-1}}
= x$. 
Up to interchanging the names of $\gamma_1,..,\gamma_{k-2}$  (and 
correspondingly the names of $\overline{\gamma_1},...,\overline{\gamma_{k-2}})$,
we can suppose    
$\overline{\gamma_1} \leq \overline{ \gamma_2} \leq  .......\leq \overline{\gamma_{k-1} }$.
Let $ \eta \in Y -\{l\}$ such that $ \overline{\eta}=y$.

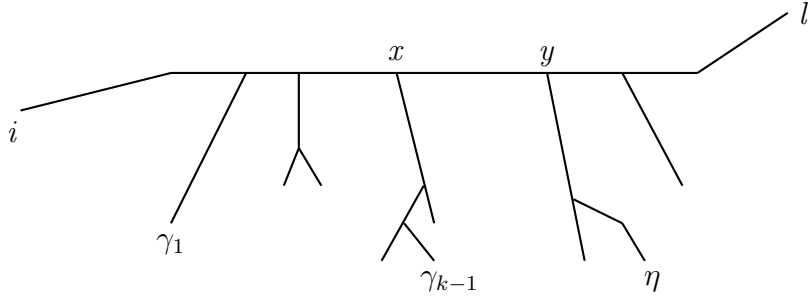
\begin{figure}[h!]
\begin{center}

\begin{tikzpicture}
\draw [thick] (0,0) --(7,0) ; 
\draw [thick] (0,0) --(-2,-0.5);
\draw [thick] (1,0) --(0,-2);
\draw [thick] (1.7,0) --(1.7,-1);
\draw [thick] (1.7,-1) --(2,-1.5);
 \draw [thick] (1.7,-1) --(1.5,-1.5);
\draw [thick] (3,0) --(3.5,-2);
\draw [thick] (3.36,-1.5) --(2.8,-2.5);
\draw [thick] (3.1,-2) --(3.5,-2.5);
\draw [thick] (5,0) --(5.5,-2.5);
\draw [thick] (5.35,-1.68) --(6,-2);
\draw [thick] (6,-2) --(6.3,-2.5);
\draw [thick] (6,0) --(6.8,-1.5);
\draw [thick] (7,0) --(8.2,0.8);

\node [below] at (-2.1,-0.5) {\emph{i}};
\node [below] at (0,-2) {\emph{$\gamma_1$}};
\node [below] at (3.7,-2.5) {\emph{$\gamma_{k-1}$}};
\node [below] at (6.4,-2.5) {\emph{$\eta$}};
\node [right] at (8.2,0.8) {\emph{l}};
\node [below] at (3,0.5) {\emph{x}};
\node [below] at (5,0.5) {\emph{y}};

\end{tikzpicture}

\caption{Neighbours in pseudostars \label{tree}}
\end{center}
\end{figure}

If $k \geq 3$, we have:
$$  D_{i, \gamma_1,...., \gamma_{k-1}}  - D_{l,
 \gamma_1,...., \gamma_{k-1} } 
= w( path (i,\overline{\gamma_{1}} )) -w( path (l,\overline{\gamma_{k-1}} ))  $$
$$  D_{i, \gamma_1, ...., \gamma_{k-2}, \eta}  - D_{l,
 \gamma_1,...., \gamma_{k-2}, \eta } 
= w( path (i,\overline{\gamma_{1}} )) -w( path (l, \overline{\eta} ))  .$$
Since the first members of the equalities above are equal by assumption, we  have that
$$w( path (l,\overline{\gamma_{k-1}} )) = w( path (l, \overline{\eta})),$$ that is 
$$w( path (l,x )) = w( path (l, y)),$$
thus the weight of the edge  $\{x, y\}$ must be $0$, which contradicts the assumption.

If $k=2$,  we have: $$  D_{i, \gamma_1,}  - D_{l,
 \gamma_1} 
= w( path (i,\overline{\gamma_{1}} )) -w( path (l,\overline{\gamma_{1}} )) = w( path (i,x )) -w( path (l,x ))    $$
$$  D_{i, \eta}  - D_{l, \eta } = w( path (i, \overline{\eta} )) -w( path (l, \overline{\eta} )) 
= w( path (i,y )) -w( path (l, y ))  .$$
Since the first members of the equalities above are equal, we must have 
that the weight of the edge $\{x,y\}$ must be $0$, which contradicts the assumption.
\medskip

(2.1) Let $R \in { [n]- \{i,j,l,m\}  \choose k-2}$,
 $A= P|_{i,j,l,m,R}$ and  $A' = P|_{i,j,l,m}$. Suppose $ \langle i,j| l,m \rangle $ holds. Call $x$ the stalk of  the cherry $\{i,j\}$ and $y$ the stalk of the cherry $\{l,m\}$ in $A'$.  Let us denote the set
of the connected components of  $A-A'$ by $ C_{A-A'}$.
For any $H \in C_{A-A'}$, let $v_H$ be the vertex
that is both a vertex of  $H$ and a vertex of  $A'$.
Then $  D_{i, m, R} ({\cal P})$ is equal to  $$  D_{i,m} ({\cal P}) +\sum_{ H \in C_{A-A'}} w(H) +
w \, \left( \bigcup_{\stackrel{ H \in C_{\mbox{\tiny $\!\!A$-$A'$}}\; \mbox{\footnotesize s.t.}
}{ v_H \in  V (path(j, x))}  }  path(v_H,x) \right) +w \, \left(\bigcup_{\stackrel{ H \in C_{\mbox{\tiny $\!\!A$-$A'$}}\; \mbox{\footnotesize s.t.}
}{ v_H \in  V (path(l, y)) } }  path(v_H,y) \right) .$$ 
Analogous formulas hold for $D_{i,l, R} ({\cal P}) , D_{j, m, R} ({\cal P}),
D_{j, l, R} ({\cal P}) $ and we can easily prove our statement. 
If $A'$ is a star,  we argue analogously. 

\medskip

(2.2)
$\Leftarrow$  Obviously (a) implies  $\langle i,j| l,m\rangle$. Suppose (b) holds; then, if $P|_{i,j,l,m}$ were a star or $\langle i,l | j,m \rangle $ or 
$ \langle i, m | j ,l \rangle$ held, from (2.1) we would get a contradiction of the assumptions. Thus  $\langle i,j| l,m\rangle$ holds.

$\Rightarrow$ 
 Let us consider the path between $i$ and $l$. We use the same notation as in (1.2). By assumption $\overline{j} < \overline{m}$.
Let $m' \in [n]$ be such that $\overline{m'}$ is the maximum node strictly less than $ \overline{m}$ and let $j' \in [n]$ be such that $\overline{j'}$ is the minimum node strictly greater than $ \overline{j}$. We could possibly have $j'=m$ and  $m'=j$ or $j'=m'$. In 
 Figure \ref{j'm'} we sketch the situation in case $j' < m'$.

\begin{figure}[h!]
\begin{center}
\begin{tikzpicture}
\draw [thick] (-1.5,0) --(1.5,0) ; 
\draw [thick] (-1.5,0) --(-3,0.2);
\draw [thick] (1.5,0) --(3,0.2);
\draw [thick] (-1.5,0) --(-1.8,-1);
\draw [thick] (1.5,0) --(1.8,-1); 
\draw [thick] (0.8,0) --(1.1,-1);
\draw [thick] (-0.8,0) --(-1.1,-1);
\node [left] at (-3,0.2) {\small $i$};
\node [below] at (-1.8,-1) {\small $j$};
\node [right] at (3,0.2) {\small $l$};
\node [below] at (1.8,-1) {\small $m$};
\node [below] at (1.1,-1) {\small{$m'$}};
\node [below] at (-1.1,-1) {\small{$j'$}};
\end{tikzpicture}
\caption{ $j'$ and $m'$ \label{j'm'}}
\end{center}
\end{figure}

 Since $P$ is a pseudostar of kind $(n,k)$,  we have:
\begin{equation} \label{mm'} 
\# \{x \in [n]| \; \overline{x} \leq \overline{m'} \} \geq k  \;\; \; \mbox { or } \;\; \; 
\# \{x \in [n]| \; \overline{x} \geq \overline{m} \} \geq k \end{equation} 
and 
\begin{equation} \label{jj'}
\# \{x \in [n]| \; \overline{x} \leq \overline{j} \} \geq k  \;\; \; \mbox{ or } \;\; \;  \# \{x \in [n]| \; \overline{x} \geq \overline{j'} \} \geq k.
\end{equation}

\medskip

$\bullet$ First suppose that 
there exists $s \in [n]-\{i,j\}$ such that $ \overline{s}
\leq \overline{j}$ and there exists $t \in [n]-\{l,m\}$ such that $ \overline{t}
\geq \overline{m}$. 
 From (\ref{mm'}) and (\ref{jj'}) we get
 $$\#\{x \in [n]| \; \overline{x} \leq \overline{m'} \} \geq k   \;\; \; \mbox{ or } \;\; \; \#\{x \in [n]| \; \overline{x} \geq \overline{j'} \} \geq k .$$

Suppose  $ \# \{x \in [n]| \; \overline{x} \leq \overline{m'} \} \geq k $ (the other case is analogous). Let $R$ be a $(k-2)$-subset of  
$\{x \in [n] - \{i,j\}| \; \overline{x} \leq \overline{m'} \} $ such that, if $m' \neq j$, then $R$   contains $s$ and $m'$.
Then 
$$ \begin{array}{ll} 
D_{i,j,R} ({\cal P})  - D_{l,j,R}({\cal P})  & = w(path (i, min ( \overline{j}
\cup \overline{R})))-  w(path (l, max ( \overline{j} \cup \overline{R}))) \\
& =
 w(path (i, min ( \overline{R})))-  w(path (l,  \overline{m'} )),
\end{array}$$ 
$$ \begin{array}{ll}
D_{i,m,R} ({\cal P})  - D_{l,m,R}({\cal P}) & =  w(path (i, min ( \overline{m}
\cup \overline{R})))-  w(path (l, max ( \overline{m} \cup \overline{R}))) \\
& =w(path (i, min ( \overline{R})))-  w(path (l, \overline{m} ))
 \end{array}$$
So we get that $D_{i,j,R} ({\cal P})  - D_{l,j,R}({\cal P})- D_{i,m,R} ({\cal P})  + D_{l,m,R}({\cal P}) = -w(\{ \overline{m'}, \overline{m}\})$, which is nonzero by assumption.
Thus  $D_{i,j,R} ({\cal P}) + D_{l,m,R}({\cal P}) \neq  D_{l,j,R}({\cal P})+ D_{i,m,R} ({\cal P})$; hence  (\ref{dis2})  holds.

$\bullet$ Now suppose that   there exists $s \in [n]-\{i,j\}$ such that $ \overline{s}
\leq \overline{j}$ and there does not exist $t \in [n]-\{l,m\}$ such that $ \overline{t}
\geq \overline{m}$ (analogously if the converse holds).  Then $\#\{x \in [n]| \; \overline{x} \leq \overline{m'} \} \geq k$.
By taking   $R$ to be a $(k-2)$-subset of  
$\{x \in [n] - \{i,j\}| \; \overline{x} \leq \overline{m'} \} $ 
such that, if $m' \neq j$, then $R$   contains $s$ and $m'$, we conclude as above that (\ref{dis2}) holds.

$ \bullet $ Finally, if  there does not  exist $s \in [n]-\{i,j\}$ such that $ \overline{s}
\leq \overline{j}$ and there does not exist $t \in [n]-\{l,m\}$ such that $ \overline{t}
\geq \overline{m}$,  then (a) holds.

By considering  the path between $i$ and $m$, we get analogously that either 
(\ref{dis}) holds or (a) holds.
\end{proof}

The following proposition characterizes Buneman indices in terms of
$k$-weights in the case $k=3$.

\begin{prop} \label{k=3}
 Let  $n \geq 5$. 
Let ${\cal P}=(P,w)$ be an essential and internal-nonzero-weighted tree with $L(P)=[n]$ (so it is a pseudostar of kind $(n,3)$). 
Let $i,j,l,m \in [n]$.

  We have that $\langle i,j | l,m  \rangle$ holds if and only if at least one of the following conditions holds:

(a)   there exists $r \in [n]- \{i,j,l,m\}$ such that the inequality 
$$ D_{i,j,l} ({\cal P}) + D_{m,r,l} ({\cal P})\neq D_{i,r,l} ({\cal P}) + D_{m,j,l} ({\cal P})  $$ 
holds and  the inequalities obtained from this  by swapping $i$ with $j$ and/or 
$l$ with $m$ hold.

(b) for any $ r \in [n]- \{i,j,l,m\}$, the following inequalities hold:
 $$ D_{i,j,r} ({\cal P}) + D_{m,l,r} ({\cal P}) \neq D_{i,m, r} ({\cal P}) + D_{j,l,r} ({\cal P}), $$ 
 $$ D_{i,j,r} ({\cal P}) + D_{m,l,r} ({\cal P}) \neq D_{i,l, r} ({\cal P}) + D_{j,m,r} ({\cal P}). $$ 

\end{prop}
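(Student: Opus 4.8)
The plan is to handle the case $k=3$, which Proposition~\ref{starbell}(2.2) deliberately leaves out (it assumes $k\geq 4$); note first that for $n\geq 5$ every essential tree with $n$ leaves is automatically a pseudostar of kind $(n,3)$, so the hypothesis on ${\cal P}$ is the natural one, and Proposition~\ref{starbell}(2.1) is available for $k=3$ (it needs only $2\leq k\leq n-2$). The engine of the argument is the following remark, obtained by the same kind of computation as in the proof of Proposition~\ref{starbell}(2.1), i.e.\ by writing $D_{a,b,e}({\cal P})$ as $D_{a,b}({\cal P})$ plus the weight of the path joining $e$ to $path(a,b)$ in $P$ and then bookkeeping the six resulting correction terms according to where $e$ attaches to $P|_{\{a,b,c,d\}}$: for pairwise distinct $a,b,c,d,e\in[n]$, if $\langle a,b\,|\,c,d\rangle$ holds in $P$ then, for \emph{every} such $e$,
\[
D_{a,c,e}({\cal P})+D_{b,d,e}({\cal P})=D_{a,d,e}({\cal P})+D_{b,c,e}({\cal P})=D_{a,b,e}({\cal P})+D_{c,d,e}({\cal P})+w(\gamma_{a,b,c,d}),
\]
whereas if $\langle a,b\,|\,c,d\rangle$ does not hold then $D_{a,b,e}({\cal P})+D_{c,d,e}({\cal P})$ coincides with at least one of the two other sums (this is Proposition~\ref{starbell}(2.1) applied to whichever of the three Buneman indices holds, or to the star case). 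Consequently, for fixed $a,b,c,d,e$: the sum $D_{a,b,e}({\cal P})+D_{c,d,e}({\cal P})$ differs from each of $D_{a,c,e}({\cal P})+D_{b,d,e}({\cal P})$ and $D_{a,d,e}({\cal P})+D_{b,c,e}({\cal P})$ if and only if $\langle a,b\,|\,c,d\rangle$ holds in $P$ and $w(\gamma_{a,b,c,d})\neq 0$.

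Next I would read conditions~(a) and~(b) through this remark, using the symmetry of $D_{x,y,z}({\cal P})$ in $x,y,z$. Condition~(b) is precisely the assertion that for every $r\in[n]-\{i,j,l,m\}$ the sum $D_{i,j,r}({\cal P})+D_{l,m,r}({\cal P})$ differs from each of the two other split-sums $D_{i,l,r}({\cal P})+D_{j,m,r}({\cal P})$ and $D_{i,m,r}({\cal P})+D_{j,l,r}({\cal P})$ of the quadruple $\{i,j,l,m\}$; since the right-hand side of the remark does not depend on $r$, this is equivalent to: $\langle i,j\,|\,l,m\rangle$ holds in $P$ and $w(\gamma_{i,j,l,m})\neq 0$ (equivalently, to the same statement with ``for every $r$'' weakened to ``for some $r$''). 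Likewise, the base inequality of~(a) together with the inequality obtained from it by swapping $i$ and $j$ amount, after the same symmetrization, to the assertion that with the fixed third index $l$ the split-sum $D_{i,j,l}({\cal P})+D_{m,r,l}({\cal P})$ of the quadruple $\{i,j,m,r\}$ differs from each of its two other split-sums; by the remark this holds iff $\langle i,j\,|\,m,r\rangle$ holds in $P$ and $w(\gamma_{i,j,m,r})\neq 0$. Swapping $l$ and $m$, respectively swapping both pairs, yields in the same way (now with fixed third index $m$) that $\langle i,j\,|\,l,r\rangle$ holds in $P$ and $w(\gamma_{i,j,l,r})\neq 0$. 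Hence~(a) is equivalent to: there exists $r\in[n]-\{i,j,l,m\}$ with $\langle i,j\,|\,m,r\rangle$ and $\langle i,j\,|\,l,r\rangle$ holding in $P$ and $w(\gamma_{i,j,m,r})\neq 0$, $w(\gamma_{i,j,l,r})\neq 0$.

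It then remains to settle the two implications. For ``$\Leftarrow$'': if~(b) holds, $\langle i,j\,|\,l,m\rangle$ holds by the reformulation; if~(a) holds, fix such an $r$. The Buneman indices $\langle i,j\,|\,m,r\rangle$ and $\langle i,j\,|\,l,r\rangle$ mean that $path(i,j)$ is vertex-disjoint from $path(m,r)$ and from $path(l,r)$; since in any tree $path(l,m)\subseteq path(l,r)\cup path(m,r)$, the path $path(i,j)$ is then disjoint from $path(l,m)$ as well, which forces $\langle i,j\,|\,l,m\rangle$. For ``$\Rightarrow$'': assume $\langle i,j\,|\,l,m\rangle$, let $x,y$ be the stalks of $\{i,j\}$ and of $\{l,m\}$ in $P|_{\{i,j,l,m\}}$, and set $\gamma=\gamma_{i,j,l,m}=path(x,y)$, a path with at least one edge, each of them an internal edge of $P$ and hence nonzero. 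If $w(\gamma)\neq 0$, then~(b) holds and we are done. If $w(\gamma)=0$, then $\gamma$ has at least two edges, hence an internal vertex $z$ adjacent to $x$; since $P$ is essential, $z$ has degree $\geq 3$, so the branch at $z$ pointing away from $\gamma$ contains a leaf $r\notin\{i,j,l,m\}$. A look at $P|_{\{i,j,m,r\}}$ and at $P|_{\{i,j,l,r\}}$ shows that $\langle i,j\,|\,m,r\rangle$ and $\langle i,j\,|\,l,r\rangle$ both hold, with $\gamma_{i,j,m,r}=\gamma_{i,j,l,r}=path(x,z)$ equal to the first (single) internal edge of $\gamma$, of nonzero weight; hence~(a) holds.

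The only genuinely technical step is the case analysis behind the displayed remark in the first paragraph; this is also exactly where the statement departs from Proposition~\ref{starbell}(2.2), because when $k=3$ the ``free'' index set is a singleton and so cannot simultaneously hold a prescribed external leaf and a prescribed leaf lying on $\gamma$ — which is why condition~(a) has to recruit one of $l$, $m$ as the free index. A further subtlety to keep in mind is that $w(\gamma_{a,b,c,d})$ may vanish even though each internal edge of $P$ is nonzero; this is why the clause $w(\gamma_{i,j,l,m})\neq 0$ cannot be dropped from the reformulation of~(b), and why the sub-case $w(\gamma_{i,j,l,m})=0$ has to be handled separately by producing the leaf $r$ that witnesses~(a).
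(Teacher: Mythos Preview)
Your proof is correct and follows essentially the same route as the paper's. The paper's own argument is extremely terse: for ``$\Rightarrow$'' it picks the node $c$ on $\gamma_{i,j,l,m}$ nearest to $x$, takes $r$ branching off at $c$, and asserts that the nonzero weight of the edge $\{x,c\}$ gives the inequalities in~(a), while if no such node exists then $\gamma$ is a single internal edge and~(b) follows; for ``$\Leftarrow$'' it simply says one can ``easily prove'' that (a) or (b) rule out the star case and the other two Buneman indices. Your write-up is more detailed and better organized: you isolate the key computation (your displayed ``remark'' expressing the three split-sums in terms of $w(\gamma_{a,b,c,d})$), use it to give clean reformulations of~(a) and~(b) as Buneman-index statements, and then the two implications become short structural arguments on paths in trees. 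The case split in ``$\Rightarrow$'' is also slightly different (you split on $w(\gamma)=0$ versus $\neq 0$, the paper splits on whether $\gamma$ has internal nodes), but the underlying construction of the witness $r$ is identical.
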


\begin{proof}

 $\Rightarrow $  Let $x$ be the stalk of the cherry $\{i,j\}$ in $P|_{i,j,l,m}$ and
let $y$ be the stalk of the cherry $ \{l,m\}$ in $P|_{i,j,l,m}$ . 
Suppose  first that in $V(\gamma_{i,j,l,m})$ there are some nodes of $P$ different form $x$ and $y$;  call $c$   the  node of $P$ in $V(\gamma_{i,j,l,m})- \{x,y\}$ such that $path (x,c) \subset path (x,c') $ for any $c'$ node of $P$ in 
$V(\gamma_{i,j,l,m})- \{x,y\}$  (that is $c$ is  the node  in 
$\gamma_{i,j,l,m}$ ``nearest'' to $x$). 
Let  $r \in [n]$ be such that $path (x,y) \cap path(x,r) = path(x, c)$.
For such an $r$, we have the inequalities in (a), in fact the 
edge   $\{x,c\}$ has nonzero weight by assumption. Thus, if (a) does not
 hold, then there are no nodes of $P$ in $V(\gamma_{i,j,l,m}) - \{x,y\}$, hence $ \gamma_{i,j,l,m}$ is an edge; 
by assumption $w( \gamma_{i,j,l,m})  \neq 0$ and we can prove easily that (b) holds.

$\Leftarrow$  We can easily prove that, if  (a)  holds or  (b) holds, then 
$P|_{i,j,l,m}$ is not a star and $\langle i, m | j,l \rangle$ and 
$\langle i, l | j,m \rangle$ do not hold.
\end{proof}

\begin{cor} \label{Bunem} Let $n, k \in \N$ with $ 3 \leq k \leq n-2$.
Let $\{D_I\}_{I \in {[n] \choose k}}$ be a  family of  real numbers.
The $D_I$ for $I \in { [n] \choose k}$
determine the Buneman's indices of an internal-nonzero-weighted essential pseudostar ${\cal P}=(P,w)$ of kind $(n,k)$  with $L(P)=[n] $  and realizing the family $\{D_I\}_I$.
\end{cor}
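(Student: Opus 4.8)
The plan is to deduce Corollary~\ref{Bunem} directly from Propositions~\ref{starbell} and~\ref{k=3}, which already show that the Buneman index $\langle i,j\mid l,m\rangle$ of an internal-nonzero-weighted essential pseudostar ${\cal P}$ of kind $(n,k)$ can be detected by a condition phrased purely in terms of the $k$-weights $D_I({\cal P})$. So the content of the corollary is really a book-keeping statement: if two such pseudostars realize the same family, their Buneman indices coincide, and hence (by standard facts about trees) the underlying combinatorial trees coincide; in other words the family $\{D_I\}_I$ \emph{determines} these indices.

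First I would split into the cases $k\geq 4$ and $k=3$. For $k\geq 4$ (so $4\le k\le n-2$), let ${\cal P}=(P,w)$ be any internal-nonzero-weighted essential pseudostar of kind $(n,k)$ realizing $\{D_I\}_I$, and fix four leaves $i,j,l,m$. Exactly one of the three Buneman configurations holds, or $P|_{i,j,l,m}$ is a star. By Proposition~\ref{starbell}(2.2), $\langle i,j\mid l,m\rangle$ holds if and only if either $\{i,j\}$ and $\{l,m\}$ are complete cherries in $P$, or the two inequalities \eqref{dis} and \eqref{dis2} have solutions $S,R\in\binom{[n]-\{i,j,l,m\}}{k-2}$. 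The second alternative is manifestly a condition on the numbers $D_I$ alone. For the first alternative I would note that ``$\{i,j\}$ is a complete cherry'' can itself be read off from the $k$-weights: by Proposition~\ref{starbell}(1.2), two leaves $i,j$ are neighbours iff $D_{i,X}({\cal P})-D_{j,X}({\cal P})$ is independent of $X\in\binom{[n]-\{i,j\}}{k-1}$, and completeness is then the maximality of the neighbour-class, again expressible via these differences. Hence the whole biconditional in (2.2) is a statement about $\{D_I\}_I$, so the truth value of $\langle i,j\mid l,m\rangle$ is a function of the family; running over all $4$-subsets shows the family determines the full Buneman index system of ${\cal P}$.

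For $k=3$ the argument is identical but uses Proposition~\ref{k=3} in place of Proposition~\ref{starbell}(2.2): there $\langle i,j\mid l,m\rangle$ is characterized by conditions (a), (b) which are visibly inequalities among the $3$-weights $D_I({\cal P})=D_I$. (The hypothesis $n\ge 5$ needed by Proposition~\ref{k=3} is guaranteed here since $k=3\le n-2$ forces $n\ge 5$.) In both cases one concludes: given the family $\{D_I\}_I$, for every $4$-tuple of leaves one can compute whether $\langle i,j\mid l,m\rangle$, $\langle i,l\mid j,m\rangle$, $\langle i,m\mid j,l\rangle$ holds or the restriction is a star, and the answer does not depend on which internal-nonzero-weighted essential pseudostar ${\cal P}$ of kind $(n,k)$ realizing the family one chose — that is exactly what ``the $D_I$ determine the Buneman's indices of ${\cal P}$'' means.

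The only delicate point — and the step I expect to need the most care — is making sure every ingredient used is genuinely intrinsic to the family and not to a particular ${\cal P}$: concretely, that ``$\{i,j\}$ and $\{l,m\}$ are complete cherries'' in alternative (a) of Proposition~\ref{starbell}(2.2) is itself decodable from the $D_I$'s. This is where Proposition~\ref{starbell}(1.1)--(1.2) does the work, giving a $k$-weight criterion for the neighbour relation; one should check that for an essential pseudostar the complete-cherry structure is recovered from the neighbour relation in the expected way, so that no hidden dependence on $w$ or on $P$ remains. Once that is in place the corollary follows with no further computation.
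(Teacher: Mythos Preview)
Your proposal is correct and follows essentially the same route as the paper: first use Proposition~\ref{starbell}(1) to read off the neighbour relation (hence the complete cherries) from the $D_I$, then for $k\geq 4$ feed this into Proposition~\ref{starbell}(2.2) whose conditions (a) and (b) are now both intrinsic to the family, and for $k=3$ invoke Proposition~\ref{k=3} directly. The paper's argument is more terse but structurally identical, and you have correctly isolated the one point requiring care, namely that alternative~(a) in (2.2) must itself be decodable from the $D_I$'s via part~(1).
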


In fact, 
by part 1 of Proposition \ref{starbell}, 
the $D_I$ for $I \in { [n] \choose k}$
determine the complete cherries of an internal-nonzero-weighted essential pseudostar ${\cal P}=(P,w)$   of kind $(n,k)$  with $L(P)=[n] $  and realizing the family $\{D_I\}_I$;
so, by part 2 for $k \geq 4$  they determine its Buneman's indices.
For $k=3$ we can use  Proposition \ref{k=3}.

\begin{thm} \label{main} Let $n, k \in \N$ with $ 3 \leq k \leq n-1$.
Let $\{D_I\}_{I \in {[n] \choose k}}$ be a  family of  real numbers.
If it is l-treelike, then there exists exactly  one internal-nonzero-weighted
essential pseudostar ${\cal P} $ of kind $(n,k)$ realizing the family.
Any other weighted essential  tree realizing the family $\{D_I\}_I$  can be obtained from  ${\cal P} $ by $k$-OI operations and by inserting internal edges of weight $0$.

If the family $\{D_I\}_{I \in {[n] \choose k}}$ is  
p-l-treelike, then  ${\cal P} $ is positive-weighted  
and any other positive-weighted essential  tree realizing the family $\{D_I\}_I$  can be obtained from  ${\cal P} $ by $k$-OI operations.
\end{thm}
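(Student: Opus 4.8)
The plan is to prove existence, then uniqueness, then the description of all other realizing trees, and finally the positive-weighted refinements. For \emph{existence}: since the family is l-treelike, fix a weighted tree $\mathcal{T}$ with $L(T)=[n]$ realizing it; we may assume $\mathcal{T}$ is essential (contract degree-$2$ vertices, adjusting weights, which leaves all $k$-weights unchanged). If $\mathcal{T}$ is already a pseudostar of kind $(n,k)$ we are close; otherwise it has an edge $e$ splitting $[n]$ into two parts each of size $<k$. Apply a $k$-IO operation on $e$: by Remark \ref{IOOIw} the $k$-dissimilarity vector is preserved, and the number of internal edges strictly decreases. Iterating, after finitely many $k$-IO operations we reach a tree every internal edge of which splits $[n]$ into parts with at least one of size $\geq k$ — i.e. an essential pseudostar of kind $(n,k)$ — still realizing the family. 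Finally, any internal edge of weight $0$ can be contracted without changing any $k$-weight; since such an edge must also satisfy the pseudostar condition, contracting it keeps us inside the class of essential pseudostars of kind $(n,k)$, and we arrive at one that is in addition internal-nonzero-weighted. Call it $\mathcal{P}=(P,w)$.

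For \emph{uniqueness}: suppose $\mathcal{P}'=(P',w')$ is another internal-nonzero-weighted essential pseudostar of kind $(n,k)$ realizing the family. By Corollary \ref{Bunem}, the family $\{D_I\}_I$ determines the Buneman indices $\langle i,j\mid l,m\rangle$ and the complete cherries of such a pseudostar; hence $P$ and $P'$ have the same complete cherries and the same Buneman indices. A standard fact (Buneman-type reconstruction) is that an essential tree is determined up to isomorphism by its quartet/Buneman data together with its cherry structure, so $P\cong P'$ as trees with labelled leaves. It then remains to recover the weights. Part (1.1) of Proposition \ref{starbell} gives, for neighbouring leaves $i,l$, the quantity $w(\mathrm{path}(i,x))-w(\mathrm{path}(l,x))$ (with $x$ their common stalk) as a difference of $k$-weights; combined with the $k$-weights themselves and the tree structure, this pins down the weight of every twig and every internal edge by the usual linear-algebra argument (e.g.\ expressing $D_I(\mathcal{P})$ for suitable $I$ and solving). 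So $w=w'$, giving $\mathcal{P}=\mathcal{P}'$. The main obstacle here is making the reconstruction of the \emph{weights} airtight for \emph{general} (not necessarily positive) weights on internal-nonzero-weighted trees: one must check the relevant linear system has full rank, which is exactly where the pseudostar hypothesis and Proposition \ref{starbell}(1.2) are used to guarantee no degeneracies from hidden zero-weight internal edges.

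For the \emph{description of all realizing essential trees}: let $\mathcal{S}=(S,w_S)$ be any essential weighted tree with $L(S)=[n]$ realizing the family. First contract all internal edges of weight $0$ of $\mathcal{S}$ to get an internal-nonzero-weighted essential tree $\mathcal{S}'$ realizing the family (same $k$-weights). Now apply $k$-IO operations to $\mathcal{S}'$ to reach an internal-nonzero-weighted essential pseudostar of kind $(n,k)$ realizing the family; by the uniqueness just proved this is exactly $\mathcal{P}$. Running the $k$-IO operations backwards shows $\mathcal{S}'$ is obtained from $\mathcal{P}$ by $k$-OI operations, and then $\mathcal{S}$ is obtained from $\mathcal{S}'$ by inserting internal edges of weight $0$; that is the claim. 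One subtlety to address is that the sequence of $k$-IO operations from $\mathcal{S}'$ is not unique, but any such sequence terminates at $\mathcal{P}$ by uniqueness, so the statement is well posed. For the \emph{positive-weighted case}: if the family is p-l-treelike, run the same existence argument starting from a positive-weighted realizing tree; since a $k$-IO operation only adds $y/k>0$ to twig weights and contracts an edge, and since a positive-weighted tree has no weight-$0$ internal edges to contract, the resulting pseudostar $\mathcal{P}$ is positive-weighted. Conversely, for any positive-weighted essential realizing tree, the contraction-of-$0$-edges step is vacuous, so it is obtained from $\mathcal{P}$ by $k$-OI operations alone, with no inserted weight-$0$ edges. (Note a $k$-OI operation applied to a positive-weighted tree can in principle lower a twig weight; one must check that the operations producing genuine positive-weighted trees are exactly those recorded, which follows from the same bookkeeping as in Remark \ref{IOOIw}.)
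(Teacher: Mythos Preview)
Your approach is essentially the same as the paper's: existence via iterated $k$-IO operations and contraction of weight-$0$ internal edges, uniqueness of the underlying tree via Corollary~\ref{Bunem} (Buneman indices determine the tree), weight reconstruction, and then inverting the $k$-IO steps to describe all other realizing trees. The positive-weighted refinement is also handled identically.

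Two places where your sketch is looser than the paper's argument:

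\textbf{The case $k=n-1$.} Corollary~\ref{Bunem} requires $k\le n-2$, so you cannot invoke it when $k=n-1$. The paper separates this case explicitly: a pseudostar of kind $(n,n-1)$ is a star, and for a star the $n$ twig weights are uniquely determined by the $(n-1)$-weights (a trivial linear system).

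\textbf{Weight reconstruction.} Your appeal to ``the usual linear-algebra argument'' and to Proposition~\ref{starbell}(1.2) is where the sketch is softest. Part (1.2) concerns recognizing \emph{neighbours} from $k$-weights; it has no role in recovering the weights once the tree $P$ is known. The paper instead gives explicit formulas. For an internal edge $e$, pick $i,j,l,m$ with $e=\gamma_{i,j,l,m}$; the pseudostar hypothesis is what guarantees the existence of $R\in{[n]\setminus\{i,j,l,m\}\choose k-2}$ with $e\notin E(P|_{R})$, and then
\[
2w(e)=D_{i,m,R}+D_{j,l,R}-D_{i,j,R}-D_{l,m,R}.
\]
With all internal edge weights known, the formula $D_I=\sum_{e\in E(P|_I),\,e\text{ not twig}}w(e)+\sum_{i\in I}w(e_i)$ gives all pairwise differences $w(e_i)-w(e_j)$ (not only for neighbouring leaves), and then any single $D_I$ fixes each twig weight. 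This is the concrete content your ``check full rank'' step needs; once you insert it, your proof matches the paper's.
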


\begin{proof}
 Let ${\cal T} =(T,w)$ be a weighted  tree with $L(T)=[n]$ and
 realizing the family $\{D_I\}_{I \in {[n] \choose k}}$. Obviously  we can suppose that it is essential. By $k$-IO operations and contracting the internal edges of weight $0$ we can change ${\cal T}$ into an
 internal-nonzero-weighted essential pseudostar ${\cal P}$ of kind $(n,k)$; it realizes the family $\{D_I\}_I $ by Remark \ref{IOOIw}.  If ${\cal T}$  is positive-weighted,
obviously also ${\cal P}$ is positive-weighted.

If $k=n-1$, it is easy to see that there exists at most a weighted essential star with leaves $1,..., n$ realizing the family $\{D_I\}_I$. 

Suppose $k \leq n-2$.
By Corollary \ref{Bunem}, the $D_I$ for $I \in { [n] \choose k}$
determine determine the Buneman's indices, and then 
they determine $P$, in fact it is well known that
 the Buneman's indices of a tree determine the tree (see for instance \cite{Dresslibro}).
 We have to show that the $D_I$  determine also the weights
of the edges of ${\cal P}$.  
The argument is completely analogous to the proof of the theorem in \cite{P-S}; we sketch it for the convenience of the reader. Let $e$ be an edge of $P$ which is not a twig.
Then there exist $i,j,l,m \in [n]$ such that $e = \gamma_{i,j,l,m} $
(see Definition \ref{cherries} for the meaning of $\gamma_{i,j,l,m} $); 
since $P$ is a pseudostar of kind $(n,k)$, there exists $R \in 
{ [n]-\{i,j,l,m\} \choose k-2 }$ such that $e $ is not an edge of $P|_{R}$. Then $$ 2 w(e) = 
D_{i,m,R} ({\cal P})   + D_{j,l,R} ({\cal P})  - D_{i,j,R} ({\cal P}) - D_{l,m,R} ({\cal P})   , $$ 
so $w(e) $ is determined by the $D_I$. For any $I \in {[n] \choose k}$ we have that 
\begin{equation} \label{D_Itwigs}
D_I ({\cal P}) =  \sum_{\stackrel{  \mbox{\footnotesize $e \in E(T|_{I})$     }}{\mbox{\footnotesize $e$ not twig }}} w(e) + \sum_{i \in I} w(e_i),
\end{equation}
where $e_i$ denotes the twig associated to $i$.

So, for any $i,j \in [n]$ and for any $S \in { [n]-\{i,j\} \choose k-1 }$,
 we have:
$$ w(e_i) -w (e_j) = \sum_{l \in (iS) } w(e_l) - \sum_{l \in (jS) } w(e_l) = 
\Big( D_{iS}  - \sum_{\stackrel{  \mbox{\footnotesize $e \in E(T|_{iS})$     }}{\mbox{\footnotesize $e$ not twig }}} w(e)  \Big) - \Big( D_{jS}  - 
 \sum_{\stackrel{  \mbox{\footnotesize $e \in E(T|_{jS})$     }}{\mbox{\footnotesize $e$ not twig }}} w(e) \Big). $$ 
Hence the difference of the weights of the twigs is determined by  the $D_I$.
From the formula (\ref{D_Itwigs})  we get the weight of every twig. 
\end{proof}

\begin{cor}
The statement of Pachter-Speyer Theorem holds
also for general weighted trees.
  
\end{cor}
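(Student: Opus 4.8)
The plan is to derive the Corollary from Theorem \ref{main} by exploiting the numerical range $3\le k\le (n+1)/2$ of Pachter--Speyer. The first observation is purely combinatorial: if $3\le k\le (n+1)/2$ then $2k\le n+1$, so $n\ge 2k-1>2k-2$. Any edge of a tree with leaf set $[n]$ splits $[n]$ into two nonempty parts whose cardinalities add up to $n$; were both parts of cardinality $<k$ we would get $n\le (k-1)+(k-1)=2k-2$, a contradiction. Hence in this range no tree with leaves $1,\dots,n$ possesses an edge of the kind required in Definition \ref{IO}, so no $k$-IO operation --- and therefore no $k$-OI operation --- can be performed on such a tree. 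In particular every essential weighted tree with leaves $1,\dots,n$ is automatically an essential pseudostar of kind $(n,k)$.

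Next I would apply Theorem \ref{main}. Let $\{D_I\}_{I\in {[n]\choose k}}$ be l-treelike and let ${\cal P}$ be the unique internal-nonzero-weighted essential pseudostar of kind $(n,k)$ realizing it. By Theorem \ref{main}, any essential weighted tree ${\cal T}$ realizing the family is obtained from ${\cal P}$ by a sequence of $k$-OI operations and insertions of internal edges of weight $0$. By the first paragraph the $k$-OI operations cannot occur, so ${\cal T}$ is obtained from ${\cal P}$ by inserting internal edges of weight $0$ only; in particular ${\cal T}$ and ${\cal P}$ share the same $k$-dissimilarity vector, since contracting weight-$0$ edges changes no multiweight. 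If we assume in addition that ${\cal T}$ has no internal edge of weight $0$ --- in particular if ${\cal T}$ is positive-weighted, which is the setting of Pachter--Speyer --- then no edge can have been inserted, so ${\cal T}={\cal P}$. This shows that, for $3\le k\le (n+1)/2$, a positive-weighted (more generally, an internal-nonzero-weighted) essential tree with leaves $1,\dots,n$ is uniquely determined by its $k$-weights, which is the Pachter--Speyer statement now valid for general weighted trees; the second half of Theorem \ref{main} covers the positive-weighted case verbatim.

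I do not anticipate a genuine difficulty, since all the work has already been done in Proposition \ref{starbell} and Theorem \ref{main}; what remains is only to match the numerical hypothesis of Pachter--Speyer with the exact condition under which a $k$-IO operation becomes impossible. The single point worth stating explicitly is that, in this range, the discrepancy between an arbitrary essential weighted tree realizing the family and the pseudostar ${\cal P}$ reduces entirely to insertions of internal edges of weight $0$; this discrepancy disappears under any hypothesis --- such as positivity of all weights --- that forbids zero internal weights, which is why the original Pachter--Speyer uniqueness statement survives unchanged.
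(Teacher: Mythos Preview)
Your argument is correct and matches the paper's intended derivation: the Corollary is stated without proof, as an immediate consequence of Theorem~\ref{main} together with the observation (essentially Remark~(ii) after the definition of pseudostar) that in the Pachter--Speyer range every tree with $n$ leaves is already a pseudostar of kind $(n,k)$, so no $k$-OI operations are available. Your counting argument for the boundary case $n=2k-1$ is in fact slightly more careful than the paper's Remark, which is phrased only for $k\le n/2$.
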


Finally 
   we point out that the unicity statement of  Theorem \ref{main} in the case of positive-weighted trees has been already proved (even if not stated), in fact it follows from Theorem \ref{LYP}.


\section{The range of the total weight}

Let 
$\{D_I\}_{I \in {[n] \choose k}}$ be a  p-l-treelike family in $\R_{+}$.
If $2 \leq k \leq (n+1)/2$ we know that  there exists a unique positive-weighted essential tree $\cal{T}=(T,w)$ with $L(\cal{T})=[n]$ and  realizing the family (see Theorem \ref{PatSp}). On the other hand, for $k> (n+1)/2$ this statement no longer holds and, if we call $U$ the set of all positive-weighted trees realizing the family $\{D_I\}_I$, we can wonder which is the range of the total weight of the weighted trees in $U$.

\begin{thm} \label{totpositive} Let $k,n \in \N$ with $3 \leq k \leq n-1$.
Let $\{D_I\}_{I \in {[n] \choose k}}$ be a p-l-treelike family of positive real numbers and let $U$ be the set of the positive-weighted trees with $[n]$ as set of leaves and
realizing the family $\{D_I\}_I$. Call $\mathcal{P}$  the unique essential pseudostar of kind $(n,k)$  in $U$ (see Theorem \ref{main}). 
The following statements hold:

(i) $$ \displaystyle \sup_{\cal{T} \in U}\,\{\, D_{tot} (\cal{T})\} = D_{tot} (\mathcal{P})$$
and the supremum is attained only by ${\cal P}$;

(ii) if $\#U>1$, then
$$ \inf_{\cal{T} \in U}\,\{\, D_{tot} (\cal{T})\} =D_{tot} (\mathcal{P})-(n-k)\cdot m$$
where $m$ is the minimum among the weights of the twigs of $\mathcal{P}$; the infimum is not attained. 
\end{thm}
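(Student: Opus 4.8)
## Proof Plan for Theorem \ref{totpositive}

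The plan is to read off everything from the structural description in Theorem \ref{main}: up to suppressing vertices of degree $2$, every element of $U$ is obtained from $\mathcal{P}$ by a finite sequence of $k$-OI operations, and both the total weight and the twig weights transform in a completely explicit way under such operations (Remark \ref{IOOIw} and Definition \ref{IO}).

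First I would reduce to essential trees. If $\mathcal{T}\in U$ has a vertex of degree $2$, contracting one of the two edges at that vertex yields a tree with the same leaf set, the same $k$-dissimilarity vector and the same total weight; iterating, we obtain the essentialization $\mathcal{T}^{ess}\in U$ with $D_{tot}(\mathcal{T}^{ess})=D_{tot}(\mathcal{T})$, and $\mathcal{T}^{ess}=\mathcal{P}$ precisely when $\mathcal{T}$ is a subdivision of $\mathcal{P}$. So fix an essential $\mathcal{T}\in U$. By the positive-weighted part of Theorem \ref{main}, $\mathcal{T}$ is obtained from $\mathcal{P}$ by $k$-OI operations $O_1,\dots,O_r$; let $y_i>0$ be the weight assigned to the edge created by $O_i$. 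Two facts drive the argument. First, applying Remark \ref{IOOIw} to the inverse ($k$-IO) operations, each $O_i$ decreases the total weight by $\tfrac{n-k}{k}y_i$, so
$$ D_{tot}(\mathcal{T}) \;=\; D_{tot}(\mathcal{P}) - \frac{n-k}{k}\sum_{i=1}^{r} y_i . $$
Second, a $k$-OI operation only splits a node and therefore does not alter the set of twig edges (here we use that $\mathcal{T}$ is essential, so no vertex of degree $2$ is ever created along the chain — a later $k$-OI operation could never remove one); by Definition \ref{IO} each $O_i$ subtracts $y_i/k$ from the weight of every twig. Hence the twig of $\mathcal{P}$ of minimal weight $m$ has weight $m-\tfrac1k\sum_{i=1}^{r}y_i$ in $\mathcal{T}$, and positivity of $\mathcal{T}$ forces
$$ \sum_{i=1}^{r} y_i \;<\; km . $$

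Part (i) is now immediate: since $y_i>0$ and $n>k$, the first displayed formula gives $D_{tot}(\mathcal{T})\le D_{tot}(\mathcal{P})$, with equality iff $r=0$, i.e. $\mathcal{T}=\mathcal{P}$; for a general (possibly non-essential) $\mathcal{T}\in U$ this reads $D_{tot}(\mathcal{T})\le D_{tot}(\mathcal{P})$ with equality only when $\mathcal{T}$ is a subdivision of $\mathcal{P}$. For part (ii), combining the two displayed relations gives
$$ D_{tot}(\mathcal{T}) \;=\; D_{tot}(\mathcal{P}) - \frac{n-k}{k}\sum_{i=1}^{r}y_i \;>\; D_{tot}(\mathcal{P}) - \frac{n-k}{k}\,(km) \;=\; D_{tot}(\mathcal{P})-(n-k)m , $$
so $D_{tot}(\mathcal{P})-(n-k)m$ is a strict lower bound for $D_{tot}$ on $U$; in particular the infimum, once computed, is not attained. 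To see that this value is the infimum, use $\#U>1$: by Theorem \ref{main} there is an essential tree in $U$ distinct from $\mathcal{P}$, hence at least one $k$-OI operation $O$ (a node of $\mathcal{P}$ together with an admissible way of splitting it) is available. For $0<y<km$ let $\mathcal{P}^{(y)}$ be the tree obtained by performing $O$ on $\mathcal{P}$ with weight $y$ on the new edge: the old internal edges keep the positive weights of $\mathcal{P}$, the new edge has weight $y>0$, and each twig has weight at least $m-y/k>0$, so $\mathcal{P}^{(y)}\in U$; by Remark \ref{IOOIw}, $D_{tot}(\mathcal{P}^{(y)})=D_{tot}(\mathcal{P})-\tfrac{n-k}{k}y$. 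Letting $y\to(km)^{-}$ gives $D_{tot}(\mathcal{P}^{(y)})\to D_{tot}(\mathcal{P})-(n-k)m$, whence $\inf_{\mathcal{T}\in U}D_{tot}(\mathcal{T})=D_{tot}(\mathcal{P})-(n-k)m$, not attained.

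The step I expect to be the main obstacle is the twig bookkeeping: one must be confident that along a chain of $k$-OI operations producing an essential tree the $n$ twig edges persist as twig edges, so that the cumulative effect on each twig weight is exactly $-\tfrac1k\sum_i y_i$. This is what yields the sharp constraint $\sum_i y_i<km$, and hence simultaneously the exact value of the infimum and the fact that it is not attained; everything else is a direct application of Theorem \ref{main} and Remark \ref{IOOIw}.
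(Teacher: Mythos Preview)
Your proof is correct and follows essentially the same route as the paper's: reduce to essential trees, relate any such tree to $\mathcal{P}$ via the $k$-OI/$k$-IO operations supplied by Theorem \ref{main}, read off the total-weight change from Remark \ref{IOOIw}, and extract the constraint $\sum_i y_i < km$ from positivity of the twig weights. The paper runs the argument in the $k$-IO direction (transforming a given $\mathcal{A}\in U$ into $\mathcal{P}$) and parametrizes the approximating family by $x=y/k$ rather than $y$, but these are cosmetic differences; your justification of the twig bookkeeping (no degree-$2$ vertex can appear along a chain of $k$-OI operations ending in an essential tree, so the $n$ twig edges persist) is in fact slightly more explicit than what the paper writes.
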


\begin{proof}
(i) Let $\cal{T}=(T,w)$ be a weighted tree in $U$. Without changing
the dissimilarity family and the total weight we can suppose that it is essential. By using several $k$-IO  operations, we can transform it into a
pseudostar of kind $(n,k)$. By Remark \ref{IOOIw} the dissimilarity family does not change, so, by Theorem \ref{main}, 
the pseudostar of kind $(n,k)$  we have obtained 
 must be the unique essential pseudostar of kind $(n,k)$  in $U$, that is $\mathcal{P}$.  By Remark \ref{IOOIw}
 we have that  $D_{tot} (\cal{T}) \leq D_{tot}(\mathcal{P})$; furthermore, if $\cal{T}$ is different from $\mathcal{P}$, then $D_{tot}(\cal{T}) < D_{tot}(\mathcal{P})$.

(ii) Suppose $\#U>1$. Then we can make a $k$-OI operation 
 on $\mathcal{P}$: we add an edge of weight $kx$, where $x<m$, in such a way that 
the edge divides the tree in two  trees each with less than $k$ leaves, and we subtract $x$ from the weight of every twig of $\mathcal{P}$. Let ${\cal T}$ be the tree we have obtained. 
We have 
$$D_{tot} ({\cal T})=D_{tot} (\mathcal{P})+k \cdot x-n \cdot x=D_{tot} (\mathcal{P})-(n-k) \cdot x$$
Obviously,  
the limit of  $D_{tot} ({\cal T})$, as $x$ approaches $m$, is $D_{tot} (\mathcal{P})-(n-k)\cdot m$.

Finally, let ${\cal A} \in U$. Without changing
the dissimilarity family and the total weight we can suppose that it is essential. 
 We can transform ${\cal A}$ into $\mathcal{P}$ by several $k$-IO operations, contracting edges with weights $y_1,y_2,...y_s$ and
adding $\frac{y_1+y_2+...+y_s}{k} $ to the weight of every twig. Then we get:
\begin{equation} \label{wawp}
D_{tot} (\mathcal{P})=D_{tot}({\cal A})+ \frac{n-k}{k}(y_1+y_2+...+y_s). 
\end{equation} 
Furthermore, since, to obtain $\mathcal{P}$ from ${\cal A}$, we have added  $\frac{y_1+y_2+...+y_s}{k} $ to the weight of every twig, we have that 
\begin{equation} \label{m}
m>\frac{y_1+y_2+...+y_s}{k}.
\end{equation}
Thus, from (\ref{wawp}) and (\ref{m}) we get:
$$ D_{tot} ({\cal A})=D_{tot} (\mathcal{P})- \frac{n-k}{k}(y_1+y_2+...+y_s)> D_{tot} (\mathcal{P}) - (n-k) \cdot m.$$
\end{proof}

The following theorem answers  the analogous problem for general weighted trees.

\begin{thm}  \label{totnonzero} Let $k,n \in \N$ with $3 \leq k \leq n-1$.
Let $\{D_I\}_{I \in {[n] \choose k}}$ be a l-treelike family of real numbers and let $U$ be the set of weighted trees with $[n]$ as set of leaves and
realizing the family $\{D_I\}_I$. 

(i)  If in $U$ there are only  weighted pseudostars of kind $(n,k)$  (for instance if $k \leq \frac{n}{2} $), then $ D_{tot}({\cal P})= D_{tot}({\cal P}') $ for any 
${\cal P}, {\cal P}' \in U$; in particular 
$$ \inf_{\cal{P} \in U}\,\{\, D_{tot}(\cal{P})\} =  \sup_{\cal{P} \in U}\,\{\, D_{tot} (\cal{P})\}.$$

(ii) If in $U$ there are weighted trees that are not  pseudostars of kind $(n,k)$, then
$$ \inf_{\cal{T} \in U}\,\{\, D_{tot} (\cal{T})\} = - \infty, \;\;\;\; \;\; \;\; \;\; \sup_{\cal{T} \in U}\,\{\, D_{tot} (\cal{T})\}= + \infty.$$
\end{thm}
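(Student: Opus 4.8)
The plan is to treat the two parts using the structural description already established in Theorem \ref{main}: every essential weighted tree realizing $\{D_I\}_I$ is obtained from the unique internal-nonzero-weighted essential pseudostar ${\cal P}$ of kind $(n,k)$ by $k$-OI operations and by inserting internal edges of weight $0$. For part (i), suppose $U$ contains only pseudostars of kind $(n,k)$. If ${\cal P}'\in U$, I may first pass to an essential tree with the same total weight and same dissimilarity family; this essential tree is still a pseudostar of kind $(n,k)$, hence admits no $k$-IO operation, so by Theorem \ref{main} it differs from ${\cal P}$ only by insertion of internal edges of weight $0$ — which changes neither the topology relevant to $k$-weights nor the total weight. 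Therefore $D_{tot}({\cal P}')=D_{tot}({\cal P})$, and since this holds for every element of $U$ the infimum and supremum coincide. The parenthetical case $k\le n/2$ is covered because, by Remark (ii) after Definition \ref{cherries}, every tree with $n$ leaves is then a pseudostar of kind $(n,k)$.

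For part (ii), suppose some ${\cal T}\in U$ is not a pseudostar of kind $(n,k)$. I may assume ${\cal T}$ is essential without changing $D_{tot}$ or the family. Since ${\cal T}$ is not a pseudostar of kind $(n,k)$, it has an internal edge $e$ dividing $[n]$ into two parts each of size $<k$; equivalently, starting from ${\cal P}$ one genuine $k$-OI operation is available along the subtree structure of ${\cal T}$. The key point is that a $k$-OI operation on an edge of weight $y$ can be performed with $y$ an arbitrary real number: it replaces a contracted edge by an edge of weight $y$ (splitting the leaf set into two parts each of size $<k$) and subtracts $y/k$ from every twig weight. By Remark \ref{IOOIw} this preserves the $k$-dissimilarity family, and the total weight changes by $\frac{n-k}{k}\,y$ (the same computation as in Remark \ref{IOOIw}, now valid for any real $y$ since we are in the general-weighted setting and no sign constraint is imposed). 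Letting $y\to+\infty$ sends $D_{tot}\to+\infty$ and letting $y\to-\infty$ sends $D_{tot}\to-\infty$, proving both equalities. One should check that the resulting tree is genuinely in $U$: it has leaf set $[n]$, it realizes $\{D_I\}_I$ by Remark \ref{IOOIw}, and it is a well-defined weighted tree for every real $y$ (weights of internal and twig edges are simply real numbers, with no positivity requirement), so it lies in $U$.

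The main obstacle is making precise, in part (ii), that an admissible $k$-OI operation actually exists — i.e. that from the failure of the pseudostar condition for \emph{some} realizing tree one can exhibit an edge-insertion on ${\cal P}$ (or directly on ${\cal T}$) that splits $[n]$ into two pieces each of cardinality strictly less than $k$. This follows because ${\cal T}$ itself, being essential and not a pseudostar of kind $(n,k)$, has by definition an internal edge with this splitting property; we may then vary the weight of that very edge in ${\cal T}$ (keeping the rest of ${\cal T}$ fixed is not quite enough — we must compensate on the twigs as in Definition \ref{IO} to preserve the family), and Remark \ref{IOOIw} guarantees the $k$-weights are unchanged while $D_{tot}$ ranges over an unbounded set as the chosen weight runs over $\R$. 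A minor point to state carefully is that after such an operation the tree need not be essential or a pseudostar, but that is irrelevant: $U$ was defined to contain \emph{all} weighted trees with leaf set $[n]$ realizing the family, with no essentiality or pseudostar restriction.
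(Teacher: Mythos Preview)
Your proposal is correct and follows essentially the same approach as the paper. For part (i) the paper contracts zero-weight internal edges and passes to the essential version to land on the unique pseudostar of Theorem \ref{main}, while you invoke the ``obtained from ${\cal P}$ by $k$-OI operations and zero-weight insertions'' description and observe that no genuine OI step can have occurred if the result is still a pseudostar; these are two sides of the same argument. For part (ii) both you and the paper take the offending edge in a non-pseudostar ${\cal T}\in U$ and vary its weight while compensating on the twigs, which is exactly the reweighting $w'(\overline{e})=w(\overline{e})+z$, $w'(t)=w(t)-z/k$ used in the paper.

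One minor arithmetic slip: inserting an edge of weight $y$ and subtracting $y/k$ from each of the $n$ twigs changes $D_{tot}$ by $y-\frac{n}{k}y=-\frac{n-k}{k}y$, not $+\frac{n-k}{k}y$ (Remark \ref{IOOIw} is stated for the IO direction, and you are applying its inverse). Your limits $y\to\pm\infty$ are therefore swapped, but since $y$ ranges over all of $\R$ the conclusion is unaffected.
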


\begin{proof}
(i) Let ${\cal P}$ and ${\cal P}'$ be in $U$. Denote by $\hat{{\cal P}}$ and $ \hat{{\cal P}'}$ the weighted trees 
obtained respectively from ${\cal P}$ and ${\cal P}'$ by contracting the
internal  edges of weight $0$. Let $\overline{{\cal P}}$ and $ \overline{{\cal P}'}$ be the weighted essential trees equivalent respectively to $\hat{{\cal P}}$ and $ \hat{{\cal P}'}$.
Obviously both  $\overline{{\cal P}}$ and $ \overline{{\cal P}'}$ pseudostars of kind $(n,k)$  and realize the family 
$\{D_I\}_I$; so,  by Theorem \ref{main},  they are equal. Thus  $D_{tot} (\overline{\cal P}) = D_{tot} (\overline{{\cal P}'})$,
therefore $D_{tot} ({\cal P}) = D_{tot} ({\cal P}')$.

(ii) Let ${\cal T}=(T,w) $ be an element of  $U$ that is  not a pseudostar
of kind $(n,k)$; then there is an edge $\overline{e}$ dividing 
the tree into two trees such that  each of them has less than $k$ leaves.
Let $z \in \R$. We define on $T$ a new weight $w'$ as follows:
$$w' ( \overline{e})  := w (\overline{e}) + z ;$$
for every twig $t$, 
$$ w' (t) := w(t) - \frac{1}{k} z  ;$$ 
for any edge $e$ different from $ \overline{e}$ and not contained in a twig, we define $w'(e) = w(e)$.

Let ${\cal T}' = (T,w')$. We have that $D_I ({\cal T}') = D_I$ for any $I \in {[n] \choose k}$, so ${\cal T}' \in U$. Furthermore   $$D_{tot} ({\cal T}')=
D_{tot}({\cal T}) +z - \frac{n}{k} z=   D_{tot}({\cal T}) - \frac{n-k}{k} z.$$ 
Hence  $\lim_{z \rightarrow - \infty}D_{tot} ({\cal T}') =+ \infty$, while
$\lim_{z \rightarrow + \infty}D_{tot} ({\cal T}') =- \infty$.
\end{proof}


{\small }

\bigskip


\begin{thebibliography}{Dilloo Dilloo 83}

\bibitem{B-R} A. Baldisserri, E. Rubei {\sl On graphlike $k$-dissimilarity vectors}, Ann. Comb., 18 (3) 356-381 (2014)

\bibitem{B-S} H-J Bandelt, M.A. Steel {\em Symmetric matrices 
representable by weighted trees over a cancellative abelian monoid}.
SIAM J. Discrete Math. 8 (1995), no. 4, 517--525

\bibitem{BC} C. Bocci, F. Cools {\em A tropical interpretation of
 m-dissimilarity maps} Appl. Math. Comput. 212 (2009), no. 2, 349-356 


\bibitem{B} P. Buneman {\em A note on the metric properties
 of trees}, Journal of  Combinatorial Theory Ser.  B  17 (1974), 48-50

\bibitem{Dresslibro}  A. Dress, K. T. Huber, J. Koolen, V. Moulton, A. Spillner,
 Basic phylogenetic combinatorics. Cambridge University Press, Cambridge, 2012

\bibitem{Dress} A. Dress {\em  
Trees, tight extensions of metric spaces, and the cohomological dimension of certain groups: a note on combinatorial properties of metric spaces}, Adv. in Math. 53 (1984), no. 3, 321-402

\bibitem{H-H-M-S} 
S.Herrmann, K.Huber, V.Moulton, A.Spillner, 
{\em Recognizing treelike k-dissimilarities},
J. Classification 29  (2012),   no. 3, 321-340

\bibitem{Iri} B. Iriarte Giraldo
{\em Dissimilarity vectors of trees are contained in the 
tropical Grassmannian},  Electron.  J.  Combin. 17 
(2010), no. 1

\bibitem{L-Y-P} D. Levy, R. Yoshida, L. Pachter {\em Beyond pairwise
distances: neighbor-joining with phylogenetic diversity esitimates},
Mol. Biol. Evol.  23  (2006), no. 3,  491-498 


\bibitem{Man} C. Manon, {\em Dissimilarity maps on trees and the representation theory of $SL_m( \C)$}, J. Algebraic Combin. 33 (2011), no. 2, 199-213


\bibitem{P-S}  L. Pachter, D. Speyer {\em Reconstructing
 trees from subtree weights}, Appl. Math. Lett. 17  (2004), no. 6, 615--621

\bibitem{Ru1}
E. Rubei {\em Sets of double and
 triple weights of trees}, Ann. Comb. 15  (2011), no. 4, 723-734  

\bibitem{Ru2} E. Rubei {\em On dissimilarity vectors of general 
weighted trees}, Discrete Math. 312  (2012), no. 19,  2872-2880 

\bibitem{SimP} J.M.S. Simoes Pereira {\em A Note on the Tree 
Realizability of a distance matrix}, J. Combinatorial Theory 6 (1969),
 303-310





\bibitem{Za} K.A. Zaretskii {\em Constructing trees from the set of distances between pendant vertices}, Uspehi Matematiceskih Nauk. 20 (1965), 90-92 

\end{thebibliography}
\end{document}